\newtheorem{thm}{Theorem}[section]
\newtheorem{lem}[thm]{Lemma}
\newtheorem{crl}[thm]{Corollary}
\newtheorem{cjt}[thm]{Conjecture}
\newtheorem{prop}[thm]{Proposition}
\newtheorem{ex}[thm]{Example}
\newtheorem{fact}[thm]{Fact}
\newtheorem{problem}[thm]{Problem}
\newtheorem{note}[thm]{Notation}
\theoremstyle{definition}
\newtheorem{dfn}[thm]{Definition}
\newtheorem{rem}[thm]{Remark}
\renewcommand{\P}[0]{\mathbb{P}}
\newcommand{\N}[0]{\mathbb{N}}
\newcommand{\Z}[0]{\mathbb{Z}}
\newcommand{\C}[0]{\mathbb{C}}
\newcommand{\pas}[1]{\left(#1\right)}
\newcommand{\Pas}[1]{\left\{#1\right\}}
\newcommand{\floor}[1]{\left\lfloor #1 \right\rfloor}
\newcommand{\abs}[1]{\left\lvert#1\right\rvert}
\newcommand{\flog}[1]{\frac{1}{\log #1}}
\newcommand{\ep}{\varepsilon}
\newcommand{\Falpha}{\mathcal{F}_\alpha}
\newcommand{\fdf}{{}_{\Falpha}\sigma}
\newcommand{\FDF}[1]{{}_{\Falpha}\sigma\pas{#1}}
\newcommand{\FDFk}[2]{{}_{\Falpha}\sigma^{#1}\pas{#2}}
\newcommand{\ord}[1]{\textup{ord}_\alpha\pas{#1}}
\newcommand{\ind}[1]{\textup{ind}_\alpha\pas{#1}}
\newcommand{\valpha}{\varphi_\alpha}
\def\@seccntformat#1{\csname the#1\endcsname.\quad}
\begin{document}
\title{The relation between a generalized Fibonacci sequence and the length of Cunningham chains}
\author{Yuya Kanado}
\address{Yuya Kanado\\
	Graduate School of Mathematics\\ Nagoya University\\ Furo-cho\\ Chikusa-ku\\ Nagoya\\ 464-8602\\ Japan}
\email{m21017a@math.nagoya-u.ac.jp}
\date{}

\begin{abstract}
	Let $p$ be a prime number. A chain $\{p,2p+1,4p+3,\cdots,(p+1)2^{l(p)-1}-1\}$ is called the Cunningham chain generated by $p$ if all elements are prime numbers and $(p+1)2^{l(p)}-1$ is composite. Then $l(p)$ is called the length of the Cunningham chain. It is conjectured by Bateman and Horn in 1962 that the number of prime numbers $p\leq N$ such that $l(p)\geq k$ is asymptotically equal to $B_k N/(\log N)^k$ with a real $B_k>0$ for all natural numbers $k$. This suggests that $l(p)=\Omega(\log p/\log\log p)$. However, so far no good estimation is known. It has not even been proven whether $\limsup_{p\to\infty} l(p)$ is infinite or not. All we know is that $l(p)=5$ if $p=2$ and $l(p)<p$ for odd $p$ by Fermat's little theorem. Let $\alpha\geq3$ be an integer. In this article, a generalized Fibonacci sequence $\Falpha=\{F_n\}_{n=0}^\infty$ is defined as $F_0=0,F_1=1, F_{n+2}=\alpha F_{n+1}+F_n (n\geq0)$, and $\FDF{n}=\sum_{d\mid n, 0<d\in\Falpha}d$ is called the divisor function on $\Falpha$. Then we obtain an interesting relation between the iteration of $\fdf$ and the length of Cunningham chains. For two prime numbers $p$ and $q$, the fact $p=2q+1$ or $2q-1$ is equivalent to ${}_{\Falpha}\sigma({}_{\Falpha}\sigma(F_p))={}_{\Falpha}\sigma(F_q)$ for some $\alpha$. By this relation, we get $l(p)\ll\log p$ under a certain condition. It seems that this sufficient condition is plausible by numerical test. Furthermore, the condition, written in terms of prime numbers, can be replaced by the condition written in terms of natural numbers. This implies that the problem of upper estimation of $l(p)$ is reduced to that on natural numbers.
\end{abstract}

\maketitle

\section{Introduction}

\begin{note}
	\begin{itemize}
		\item Let $\Z$ and $\P$ be the set of integers and prime numbers, respectively.
		\item Let $\N$ be $\{1,2,3,\cdots\}$, and its elements are called natural numbers.
		\item For real $x$, $\floor{x}$ means to the unique integer such that $\floor{x}\leq x<\floor{x}+1$; called the integer part of $x$.
		\item Unless stated otherwise, the greatest common divisor of two integers $m$ and $n$ is denoted by $(m,n)$.
	\end{itemize}
\end{note}

It is well known that arithmetical progressions are related to prime numbers. For example, the Dirichlet prime number theorem in $1837$ \cite{Dirichlet} and the Green-Tao theorem in $2004$ \cite{Green-Tao}. However, the following conjecture of Dickson has not been well studied.

\begin{cjt}[Dickson \cite{Dickson}]\label{cjt:Dickson}
	Let $k$ be a natural number. Suppose that $a_i$ and $b_i$ are integers with $b_i\geq1$ and $(a_i,b_i)=1$ for all $i=1,\ldots,k$. Then, there are infinitely many integers $n$ such that $a_1+b_1n,\cdots,a_k+b_kn$ are simultaneously primes.
\end{cjt}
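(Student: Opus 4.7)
The plan would be to approach this via the Hardy-Littlewood circle method combined with sieve techniques, as this is the natural analytic framework for questions about simultaneous primality in linear forms. First, one must observe that Conjecture~\ref{cjt:Dickson} as stated requires an implicit admissibility hypothesis: for every prime $q$ there should exist a residue class $n\pmod{q}$ such that $q\nmid\prod_{i=1}^{k}(a_i+b_i n)$. Without this, local obstructions modulo some small prime force the product to always share a common factor with $q$, and no solutions exist. Under this admissibility assumption one then expects, following Hardy and Littlewood, a quantitative asymptotic of the form
\[
\#\Pas{n\leq N : a_i+b_i n\in\P \text{ for all } i} \sim \mathfrak{S}(a,b)\,\frac{N}{(\log N)^k},
\]
where the singular series $\mathfrak{S}(a,b)>0$ encodes the local densities at each prime, and Conjecture~\ref{cjt:Dickson} would follow as the qualitative shadow of this count.

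The natural first step is an upper bound of the correct order of magnitude via Selberg's upper bound sieve applied to the product $\prod_i(a_i+b_i n)$; this is within reach of classical methods and yields $\ll N/(\log N)^k$. A matching lower bound would then be attempted either by the circle method, writing the indicator of primes via the von Mangoldt function and carrying out a major-arc / minor-arc decomposition, or by a weighted sieve in the Maynard-Tao style designed to detect many simultaneous primes inside admissible tuples. In principle, if a suitable distribution estimate for primes in arithmetic progressions of Bombieri-Vinogradov strength (or better) can be fed into the chosen machinery, one obtains at least the existence of infinitely many $n$ for which \emph{some} subset of the forms $a_i+b_in$ is simultaneously prime.

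The principal obstacle, however, is that this is essentially the prime $k$-tuples conjecture of Hardy and Littlewood, one of the deepest open problems in analytic number theory. Already the case $k=2$, $a_1=0$, $b_1=1$, $a_2=2$, $b_2=1$ is the twin prime conjecture, and the case relevant to the present paper, $a_1=0,b_1=1,a_2=1,b_2=2$, is the infinitude of Sophie Germain primes; both remain unresolved. Sieve methods yield only an upper bound of the right shape and the correct order of magnitude for almost-prime analogues, while the breakthroughs of Zhang, Maynard and Tao produce bounded-length tuples containing two primes but fall short of forcing simultaneous primality in all $k$ linear forms. The fundamental difficulty is the parity obstruction: sieve methods, by themselves, cannot distinguish numbers with an even versus odd number of prime factors, so capturing all $k$ forms as primes simultaneously seems to require a genuinely new analytic input beyond current technology. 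Consequently I would expect, at best, to establish conditional forms of the conjecture (assuming generalized Riemann-type hypotheses or Elliott-Halberstam-type conjectures) rather than the full statement.
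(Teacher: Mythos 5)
You were not asked to prove a theorem here: the statement is Dickson's conjecture, which the paper cites only as motivation and explicitly describes as unsolved for every $k\geq2$ (the case $k=1$ being Dirichlet's theorem), so the paper contains no proof to compare against. Your assessment is therefore the correct one --- recognizing it as essentially the Hardy--Littlewood prime $k$-tuples problem, blocked by the parity barrier, and not pretending the sketched sieve/circle-method programme closes the gap. Your side remark is also accurate and worth noting: the hypothesis $(a_i,b_i)=1$ as printed is weaker than Dickson's actual condition, which requires that no prime $q$ divide $\prod_{i=1}^{k}(a_i+b_i n)$ for every $n$ (otherwise, e.g.\ the pair $n$, $n+1$ gives a counterexample via the prime $2$), so the admissibility hypothesis you flag is indeed implicitly needed for the conjecture to be plausible.
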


If $k=1$, then this is nothing but the Dirichlet prime number theorem. However, all the other cases are unsolved. In $1962$, Bateman and Horn generalized this conjecture to irreducible polynomials.

\begin{cjt}[Bateman-Horn \cite{Bateman-Horn}]\label{cjt:Bateman-Horn}
	Suppose that $f_i(x)$ is an irreducible polynomial whose leading coefficient is a positive integer. Let $N$ be a natural number, and let $d_i$ be the degree of $f_i(x)$. Then, the number of $0<n\leq N$ such that $f_1(n),\cdots,f_k(n)$ are simultaneous by prime is given by
	\begin{align}\label{cjt:Bateman-Horn-1}
		\int_2^N\prod_{i=1}^k\frac{1}{\log f_i(x)}dx\cdot\prod_{p\in\P}\frac{1-w(p)/p}{\pas{1-p^{-1}}^k} \sim \frac{N}{d_1\cdots d_k\log^kN}\prod_{p\in\P}\frac{1-w(p)/p}{\pas{1-p^{-1}}^k}
	\end{align}
	where $w(p)$ is the number of $x\in\{0,1,\cdots, p-1\}$ which is a solution for $f_1(x)\times\cdots\times f_k(x)\equiv0\pmod p$.
\end{cjt}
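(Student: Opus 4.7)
The plan is to present a heuristic derivation, since a rigorous proof of the Bateman--Horn conjecture in its full generality is a celebrated open problem, subsuming the twin prime conjecture and many other unresolved cases. The argument falls naturally into three parts: a local probability at each prime $p$, a global assembly via an independence heuristic, and an asymptotic simplification.

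First I would compute the local correction factor at each prime $p$. Among the $p$ residue classes modulo $p$, exactly $w(p)$ make $f_1(x)\cdots f_k(x)\equiv 0\pmod p$; in each of the remaining $p-w(p)$ classes, every $f_i(n)$ is coprime to $p$. Thus the ``probability'' that a random integer $n$ makes all the $f_i(n)$ coprime to $p$ is $1-w(p)/p$, whereas if the values $f_1(n),\ldots,f_k(n)$ were $k$ independent random integers this probability would instead be $\pas{1-p^{-1}}^k$. Taking the ratio yields the local factor $(1-w(p)/p)/\pas{1-p^{-1}}^k$ appearing in \eqref{cjt:Bateman-Horn-1}, and one must verify that the resulting Euler product converges (which uses $w(p)\le k$ on average by the Chebotarev density theorem applied to each $f_i$).

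Next I would combine these local corrections with the prime number theorem heuristic that a random integer of size $f_i(n)$ is prime with probability $1/\log f_i(n)$. Assuming that the events ``$f_i(n)$ is prime'' are essentially independent once the local congruence obstructions at each prime $p$ have been isolated into the Euler product above, the expected number of $n\le N$ with all the $f_i(n)$ simultaneously prime becomes $\int_2^N\prod_i\frac{1}{\log f_i(x)}\,dx$ multiplied by this product. Replacing $\log f_i(x)$ by $d_i\log x$ for large $x$ then gives the asymptotic $N/(d_1\cdots d_k\log^k N)$ on the right of \eqref{cjt:Bateman-Horn-1}.

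The hard part --- indeed, the open part --- is justifying this independence heuristic beyond any fixed finite set of primes. Sieve methods (Brun, Selberg, etc.) yield the correct upper bound up to a constant factor, but Selberg's parity principle prevents them from producing matching lower bounds; this is why even the single-polynomial case $f(x)=x^2+1$ remains unresolved. Any rigorous treatment of the second step would therefore constitute a major breakthrough in analytic number theory, so the present ``proof'' is necessarily heuristic and the statement must stand as a conjecture.
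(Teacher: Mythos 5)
You are right to treat this as unprovable: the statement is the Bateman--Horn conjecture itself, and the paper likewise states it purely as a conjecture with a citation, offering no proof. Your heuristic (local densities $1-w(p)/p$ normalized by $(1-p^{-1})^k$, combined with the $1/\log f_i(x)$ prime-density heuristic, and the elementary replacement $\log f_i(x)\sim d_i\log x$ for the right-hand asymptotic) is exactly the standard reasoning behind the conjecture, and your acknowledgment that only the final asymptotic simplification is rigorous matches the paper's treatment; the one small slip is that convergence of the Euler product rests on $w(p)$ being $k$ on average (one root per $f_i$ by the prime ideal theorem), not merely bounded by $k$ on average.
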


Caldwell applied Conjecture.\ref{cjt:Bateman-Horn} to various progressions in \cite{Caldwell}. One of them is the Cunningham chain.

\begin{dfn}
	Let $p$ be a prime number. A sequence
	\[
	\Pas{p,2p+1,\cdots,\pas{p+1}2^{l_1(p)-1}-1}\;\pas{\textup{resp.}\Pas{p,2p-1,\cdots,\pas{p-1}2^{l_{-1}(p)-1}+1}}
	\]
	is called the first (\textup{resp.} second) Cunningham chain generated by $p$ if all elements are prime numbers and $(p+1)2^{l_1(p)-1}-1$ (\textup{resp.} $(p-1)2^{l_{-1}(p)-1}+1$) is composite. As we will see later, $l_1(p)$ and $l_{-1}(p)$ are finite. Also when we discuss the first case and the second case simultaneously, we write $l_1(p)$ and $l_{-1}(p)$ as $l(p)$.
\end{dfn}

If $p=2$, then
\begin{align*}
	2\times2+1=5\in\P,\;5\times2+1=11\in\P,\;11\times2+1&=23\in\P,\;23\times2+1=47\in\P,\;47\times2+1=95\not\in\P,\\
	2\times2-1=3\in\P,\;3\times2-1&=5\in\P,\;5\times2-1=9\not\in\P.
\end{align*}
Thus $l_1(2)=5, l_{-1}(2)=3$. If $p\not=2$, we have $l(p)\leq\textup{ord}\pas{p;2}$ by Fermat`s little theorem. Here, $\textup{ord}\pas{p;2}$ is the order of $2$ modulo $p$. In fact, in the case of the first Cunningham chain, we have
\[
\pas{p+1}2^{\textup{ord}\pas{p;2}}-1\equiv0\pmod p,
\]
and hence $(p+1)2^{\textup{ord}\pas{p;2}}-1$ is a composite number. The second case is similar. If we assume Artin`s conjecture (\text{cf.} \cite{Artin} pp.viii-x), there exist infinitely many primitive roots of $2$. Thus we only have $l(p)\leq p-1$ in this way. However, if we use the theorem of Euler, we obtain the following statement.

\begin{rem}
	$l(p)<p/2$ for all prime $p\geq7$.
\end{rem}
\begin{proof}
	First, we consider the case first Cunningham chain. If $p\equiv3\pmod4$, then $(p-1)/2$ is odd. Let $k=\varphi((p-1)/2)\geq2$. Here $\varphi$ denotes the Euler function. We have
	\[
	\pas{p+1}2^{k-1}\equiv2^k\equiv1 \pmod{\frac{p-1}{2}}.
	\]
	Thus $(p+1)2^{k-1}-1$ is a composite number for $p>3$, and we get $l_1(p)<k=\varphi((p-1)/2)<p/2$. If $p\equiv1\pmod4$,
	\[
	\pas{p+1}2^{k-1}\equiv2^{k+1}\equiv1\pmod{\frac{p-3}{2}}
	\]
	where $k=\varphi((p-3)/2)-1\geq2$, and hence $l_1(p)<\varphi((p-3)/2)-1<p/2$. Therefore $l_1(p)<p/2$ for all $p\geq7$. Let us next show the second case. If $p\equiv3\pmod4$, setting $k=\varphi((p+3)/2)-1\geq2$, we have
	\[
	\pas{p-1}2^{k-1}\equiv-2^{k+1}\equiv -1\pmod{\frac{p+3}{2}}.
	\]
	Thus,
	\[
	l_{-1}(p)<\varphi\pas{\frac{p+3}{2}}-1\leq \frac{p+3}{2}-2<\frac{p}{2}.
	\]
	Similarly, $(p-1)2^{\varphi((p+1)/2)-1}+1$ is composite if $p\equiv1\pmod4$.
\end{proof}

If $2p+1$ is also prime, then this prime number is called a safe prime. Its name comes from the fact that it is a prime number useful for Pollard`s $p-1$ algorithm (\textup{cf.}\cite{Pollard}) that suggests the vulnerability of the $RSA$ cryptosystem. It is useful if $p-1$ is decomposed into the product of small prime factors. Since $(2p+1)-1$ has $2$ and $p$ as prime factors and $p$ is big, for two safe primes $2p+1, 2q+1$, it is difficult to factorize $(2p+1)(2q+1)$ by the $p-1$ algorithm. However, the Lenstra elliptic-curve factorization in \cite{Lenstra} can overcome this weakness in the probabilistic sense. Thus the meaning of safe primes as the $RSA$ cryptosystem has faded. In $2004$, whose first draft is in $2002$, Agrawal, Kayal and Saxena found the $AKS$ primality test\cite{Agrawal-Kayal-Saxena}. In the present paper, they show that if in the case $k=2, f_1(n)=n, f_2(n)=2n+1$ of $(\ref{cjt:Bateman-Horn-1})$, the Sophie Germain prime density conjecture is true, the order of the complexity can be reduced to $\tilde{O}(\log^6 n)$. Here, $F(n)=\tilde{O}(G(n))$ is defined by $F(n)=O(G(n)h(\log G(n)))$ for some polynomial $h(x)$. But, in $2005$, Lenstra and Pomerance found a variant of $AKS$ that runs in $\tilde{O}(\log^6n)$ without any conjectures \cite{Lenstra-Pomerance}. Thus, now it is not necessary to use safe primes on the $AKS$ primality test. Safe primes can be effectively used in the Diffie-Hellman key exchange \cite{Diffie-Hellman}. Since this is the key to the strength of the discrete logarithm problem, we need to consider the question of how long and how well distributed Cunningham chains are.

We now let $k$ be a natural number. We consider the number $0<n\leq N$ such that $n,2n+1,\cdots,(n+1)2^{k-1}-1$ are simultaneously primes. Since
\[
M(n):=n(2n+1)\cdots\pas{(n+1)2^{k-1}-1}\equiv n\pmod 2,
\]
$n=0$ is the only solution of $M(n)\equiv0\pmod2$ for $n\in\{0,1\}$. Thus $w(2)=1$. If $p>2$, then $(n+1)2^{k-1}-1\pmod p$ has period $\textup{ord}\pas{p;2}$ with respect to $k$. Letting $v(p)=\min\{k,\textup{ord}\pas{p;2}\}$, this implies that $w(p)$ is equal to the number $w^\prime(p)$ of solutions of
\[
n\pas{2n+1}\cdots \pas{(n+1)2^{v(p)-1}-1}\equiv0\pmod p.
\]
We observe that each of $n,2n+1,\cdots,(n+1)2^{v(p)-1}-1$ has one solution in $\{0,\cdots,p-1\}$. That is, $w^\prime(p)\geq v(p)$. Assume that there exist $1\leq a\leq b\leq v(p)$ and $n\in\{0,\cdots,p-1\}$ satisfy
\[
\pas{n+1}2^{a-1}\equiv1\pmod p \quad\textup{ and } \quad \pas{n+1}2^{b-1}\equiv1\pmod p.
\]
Then, since $n+1$ and $p$ are relatively prime, $2^{a-1}\equiv2^{b-1}\pmod p$ and it implies $a=b$. We have $w(p)=w^\prime(p)=v(p)$. The same argument can be applied to the second Cunningham chains. From this argument and Conjecture.\ref{cjt:Bateman-Horn}, we can expect the following.

\begin{cjt}[Caldwell \cite{Caldwell}]\label{cjt:Caldwell}
	Fix a natural number $k$. Then
	\[
	\sum_{\substack{p\leq N\\ l(p)\geq k}}1\sim B_k\int_2^N\frac{dx}{\pas{\log x}\pas{\log 2x}\cdots\pas{\log2^{k-1}x}}\sim B_k\frac{N}{\log^kN}
	\]
	where the partial sum on the left hand side runs through all prime numbers $p\leq N$ satisfying $l(p)\geq k$. Here,
	\[
	B_k:=\prod_{p\in\P}\frac{1-w(p)/p}{\pas{1-p^{-1}}^k}=2^{k-1}\prod_{p>2}\frac{1-\min\Pas{k,\textup{ord}\pas{p;2}}/p}{\pas{1-p^{-1}}^k}.
	\]
\end{cjt}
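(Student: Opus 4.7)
The plan is to derive this statement directly from Bateman--Horn (Conjecture \ref{cjt:Bateman-Horn}) applied to the $k$-tuple of linear polynomials $f_i(x) = (x+1)2^{i-1}-1$ for $i=1,\ldots,k$ in the first-chain case (and $f_i(x) = (x-1)2^{i-1}+1$ in the second-chain case). Each $f_i$ is irreducible with positive integer leading coefficient, so the hypotheses of Bateman--Horn are met, and the conjectural count of $n \leq N$ for which $f_1(n),\ldots,f_k(n)$ are simultaneously prime is exactly $\#\{p \leq N : l(p) \geq k\}$ up to $O(1)$ (a single edge case at $n=0$, and the trivial conversion between ``$n$ is prime'' and ``$p \leq N$'').

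For the singular series, I would directly invoke the computation of $w(p)$ carried out in the paragraph preceding the conjecture, namely $w(2) = 1$ and $w(p) = \min\{k,\textup{ord}(p;2)\}$ for $p>2$. Substituting these into the Euler product in \eqref{cjt:Bateman-Horn-1} and separating the prime $2$ gives
\[
\prod_{p \in \P}\frac{1-w(p)/p}{(1-p^{-1})^k} = \frac{1-1/2}{(1-1/2)^k}\prod_{p>2}\frac{1-\min\{k,\textup{ord}(p;2)\}/p}{(1-p^{-1})^k} = 2^{k-1}\prod_{p>2}\frac{1-\min\{k,\textup{ord}(p;2)\}/p}{(1-p^{-1})^k},
\]
which is exactly the constant $B_k$ in the statement.

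It then remains to replace the Bateman--Horn integrand by the symmetric form in the conjecture. Since $f_i(x) = 2^{i-1}x + (2^{i-1}-1)$, one has $\log f_i(x) = \log(2^{i-1}x) + O(1/x)$ uniformly on $[2,N]$, so
\[
\prod_{i=1}^{k}\frac{1}{\log f_i(x)} = \prod_{i=1}^{k}\frac{1}{\log(2^{i-1}x)}\left(1+O\!\left(\frac{1}{x\log x}\right)\right),
\]
and integration yields the first asymptotic equivalence. The second equivalence $\sim B_k N/\log^k N$ follows from $\log(2^{i-1}x) = \log x + O(1)$ together with the standard estimate $\int_2^N dx/\log^k x \sim N/\log^k N$.

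The essential obstacle is of course that Bateman--Horn itself is unproven for any $k \geq 2$; no current technique allows one to control simultaneous primality in a $k$-tuple of linear forms from below, and sieve methods yield only an upper bound of the correct order of magnitude. Thus what the argument above really establishes is the conditional implication ``Bateman--Horn $\Rightarrow$ Conjecture \ref{cjt:Caldwell}'', and an unconditional proof of the latter appears to lie well beyond current methods.
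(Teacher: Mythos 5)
Your derivation matches the paper's own treatment: the statement is presented there as a conjecture, obtained exactly as you do by applying Bateman--Horn to the linear forms of the chain, using the computation $w(2)=1$, $w(p)=\min\{k,\textup{ord}(p;2)\}$ from the preceding paragraph to get $B_k$, and noting the same argument covers the second chains. Your added detail on replacing $\log f_i(x)$ by $\log(2^{i-1}x)$ and on the conditional nature of the implication is consistent with the paper, which likewise claims no unconditional proof.
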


If we examine the size of $B_k$, we can get some upper bound of $l(p)$.

\begin{prop}\label{prop:order of Bk}
	Let $\gamma$ be the Euler constant $\simeq0.57722$. Then for $k\geq2$,
	\[
	\log\log k+\gamma-2+O\pas{\frac{1}{\log k}}\leq \frac{\log B_k}{k}\leq \log k+\gamma+\log\log2-1+O\pas{\frac{\log k}{k}}.
	\]
\end{prop}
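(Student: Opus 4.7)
The plan is to logarithmize and estimate the resulting prime sums using Mertens's theorems together with Chebyshev's bound $\theta(k) \leq 2k$. Setting $m_p := \min\{k, \textup{ord}(p;2)\}$ for odd primes $p$ and noting that the $p = 2$ factor equals $2^{k-1}$, we have
\[
\log B_k = (k-1)\log 2 + \sum_{p > 2}\Bigl[\log\bigl(1 - \tfrac{m_p}{p}\bigr) - k\log\bigl(1 - \tfrac{1}{p}\bigr)\Bigr].
\]
Two elementary bounds on $m_p$ drive the argument. From $p \leq 2^{\textup{ord}(p;2)} - 1$ one has $m_p \geq \min\{k, \log_2(p+1)\}$, which equals $\log_2(p+1)$ for every prime $p \leq 2^k$. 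From Fermat's little theorem one has $m_p \leq \min\{k, p-1\}$, and in particular $m_p = k$ for every prime $p > 2^k$.

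For the upper bound I apply $\log(1 - m_p/p) \leq -m_p/p \leq -(\log p)/(p\log 2)$ for $p \leq 2^k$. Mertens's first theorem $\sum_{p \leq N}(\log p)/p = \log N + O(1)$, applied to $N = 2^k$, converts this piece to $-k + O(1)$; Mertens's third theorem $\sum_{p \leq N}(-\log(1-1/p)) = \log\log N + \gamma + O(1/\log N)$ converts the $-k\log(1-1/p)$ piece to $k[\log k + \log\log 2 + \gamma - \log 2] + O(1)$. For $p > 2^k$ I substitute $m_p = k$ and Taylor-expand to get $\log(1 - k/p) - k\log(1-1/p) = -k(k-1)/(2p^2) + O(k^3/p^3)$; since $\sum_{p > N}1/p^2 = O(1/(N\log N))$, the tail contributes only $O(k/2^k)$. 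Collecting and dividing by $k$ yields $\log B_k / k \leq \log k + \log\log 2 + \gamma - 1 + O(\log k/k)$.

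For the lower bound I use $\log(1 - m_p/p) \geq -\log p$ when $p \leq k$, and $\log(1 - m_p/p) \geq \log(1 - k/p)$ when $p > k$. The $p \leq k$ contribution gives
\[
\sum_{2 < p \leq k}\bigl[-\log p - k\log(1 - 1/p)\bigr] \geq -\theta(k) + k[\log\log k + \gamma - \log 2] + O(1),
\]
and Chebyshev's bound $\theta(k) \leq 2k$ combined with the $(k-1)\log 2$ prefactor produces the constant $-2k$ in the main term. The remainder $\sum_{p > k}[\log(1 - k/p) - k\log(1 - 1/p)]$ is $O(k/\log k)$: for $p > 2k$ this follows from the Taylor expansion, and for $k < p \leq 2k$ I rewrite the summand as $\log(p-k) - \log p + k\log(p/(p-1))$ and bound each part using Chebyshev ($\theta(2k) - \theta(k) = O(k)$, $\pi(2k) - \pi(k) = O(k/\log k)$) and the trivial $\sum_{k < p \leq 2k}\log(p-k) \leq (\pi(2k) - \pi(k))\log k = O(k)$. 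Dividing by $k$ yields $\log\log k + \gamma - 2 + O(1/\log k) \leq \log B_k/k$.

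The most delicate step is the range $k < p \leq 2k$ in the lower bound: individual terms $\log(1 - k/p)$ can be as negative as $-\log(k+1)$ when $p = k+1$, so a term-by-term bound would lose a factor of $\log k$. The saving comes from the sparsity of primes in this range (only $O(k/\log k)$ by Chebyshev), which keeps the aggregate contribution at $O(k/\log k)$ and thus contributes only $O(1/\log k)$ to $\log B_k/k$.
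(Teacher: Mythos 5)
Your upper bound is sound, and in fact it streamlines the paper's argument: the pointwise inequality $m_p\geq\log_2(p+1)$ for all odd $p\leq 2^k$ together with Mertens' first theorem replaces the paper's dyadic decomposition over $2^{j-1}<p\leq 2^j$ and the Stirling-type estimate, and your error term $O(1/k)$ is even better than the stated $O(\log k/k)$. The gap is in the lower bound, at the step ``the remainder $\sum_{p>k}\bigl[\log(1-k/p)-k\log(1-1/p)\bigr]$ is $O(k/\log k)$'' for the range $k<p\leq 2k$. The bounds you cite do not give this. For a lower bound on $\log B_k$ you need a \emph{lower} bound on $\sum_{k<p\leq2k}\bigl[\log(p-k)-\log p+k\log(p/(p-1))\bigr]$; your inequality $\sum\log(p-k)\leq(\pi(2k)-\pi(k))\log k$ points the wrong way (the useful trivial bound is $\sum\log(p-k)\geq0$), $-\sum_{k<p\leq2k}\log p\geq-(\theta(2k)-\theta(k))=-O(k)$ with an implied constant of order $1$, and $k\sum\log(p/(p-1))\geq0$, so what you have established is only $\geq-O(k)$, not $O(k/\log k)$. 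The sparsity heuristic in your last paragraph fails for the same reason: $O(k/\log k)$ primes, each term possibly as negative as $-\log(2k)$, is only $O(k)$ in aggregate. This matters because your route to the constant $-2$ spends the whole budget on Chebyshev's $\theta(k)\leq 2k$ and allocates nothing to $p>k$; an uncontrolled extra $-O(k)$ from $(k,2k]$ drags the constant below $-2$, so the proposition as stated is not proved.

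The claim itself is true, but to prove $\sum_{k<p\leq2k}\log\frac{p}{p-k}=O(k/\log k)$ you must exploit the near-cancellation of $\sum\log(p-k)$ against $\sum\log p$, i.e. show that few primes $p\in(k,2k]$ have $p-k$ small: with only trivial counting $\pi(k+t)-\pi(k)\leq t$ one gets $O(k\log\log k/\log k)$, and reaching $O(k/\log k)$ essentially requires a Brun--Titchmarsh type bound $\pi(k+t)-\pi(k)\ll t/\log t$, which is well beyond ``Chebyshev''. Contrast this with how the paper sidesteps the issue: it bounds $\log(1-m_p/p)\geq\log(1-k/p)=-\sum_{n\geq1}\frac1n(k/p)^n$ for \emph{all} $p>k$, keeps the $n=1$ part (which cancels against the $\log\log x$ from $\prod(1-1/p)^{-k}$ up to $k\log\log k+O(k/\log k)$), and majorizes the $n\geq2$ part by the corresponding sum over all integers $m>k$, which costs a clean $-k+O(\log k)$ and needs no information about primes in $(k,2k]$; combined with the sharp $\theta(k)\leq(1+\frac{1}{2\log k})k$ instead of $\theta(k)\leq2k$, the constant $-2$ arises as $(-1)+(-1)$. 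The simplest repairs of your write-up are either to adopt that splitting, or to invoke Brun--Titchmarsh for the short-interval counts; as written, the $O(k/\log k)$ claim for $k<p\leq2k$ is unsupported. (A minor point elsewhere: Mertens' third theorem gives $k\cdot O(1/\log k)=O(k/\log k)$, not $O(1)$, in your $p\leq k$ sum, but that is absorbed by the final $O(1/\log k)$.)
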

\begin{proof}
	The following facts in \cite{Rosser-Schoenfeld} will be used in the proof without mentioning explicitly:
	\begin{itemize}
		\item[$(a)$] $\displaystyle \log\log x+B-\frac{1}{2\log^2x}<\sum_{p\leq x}\frac{1}{p}<\log\log x +B+\frac{1}{\log^2x},$
		\item[$(b)$] $\displaystyle \prod_{p\leq x}\pas{1-\frac{1}{p}}=O\pas{\frac{1}{\log x}},$
		\item[$(c)$] $\displaystyle \prod_{p\leq x}\pas{1-\frac{1}{p}}^{-1}=e^\gamma\log x+O\pas{\frac{1}{\log x}},$
		\item[$(d)$] $\displaystyle \sum_{p\leq x}\log p<\pas{1+\frac{1}{2\log x}}x,$
	\end{itemize}
	where $x>2$ and $B=\gamma-\sum_{p\in\P}\sum_{k=2}^\infty k^{-1}p^{-k}$. For $k\geq2$ and a sufficiently large real number $x$, we define
	\[
	B_k(x) := 2^{k-1}\prod_{2<p\leq x}\frac{1-\min\Pas{k, \textup{ord}\pas{p;2}}/p}{\pas{1-p^{-1}}^k}.
	\]
	First, we estimate the lower bound of $B_k(x)$ in $k$. Write
	\begin{align*}
		B_k(x)=2^{k-1}\pas{\prod_{2<p\leq x}\pas{1-\frac{1}{p}}^{-k}}\pas{\prod_{2<p\leq k}\times\prod_{k<p\leq x}}\pas{1-\frac{\min\Pas{k, \textup{ord}\pas{p;2}}}{p}} =:2^{k-1}\Pi_1\times \Pi_2\times\Pi_3,
	\end{align*}
	say. Then $\log\Pi_1$, $\log\Pi_2$ and $\log\Pi_3$ are evaluated as
	\begin{align*}
		\log\Pi_1&=k\log\pas{\frac{1}{2}\prod_{p\leq x}\pas{1-\frac{1}{p}}^{-1}}=k\log\pas{\frac{e^\gamma}{2}\log x+O\pas{\frac{1}{\log x}}}\\
		&=k\log\frac{e^\gamma}{2}+k\log\log x+O\pas{\frac{k}{\log^2x}},\\
		\log\Pi_2&\geq\log\prod_{2<p\leq k}\pas{1-\frac{p-1}{p}}=\log2-\sum_{p\leq k}\log p=-k-\frac{k}{\log k}+\log2,\\
		\log\Pi_3&\geq\prod_{k<p\leq x}\pas{1-\frac{k}{p}}=-\sum_{k<p\leq x}\sum_{n=1}^\infty\frac{1}{n}\pas{\frac{k}{p}^n},
	\end{align*}
	respectively. Since
	\begin{align*}
		k\sum_{k<p\leq x}\frac{1}{p}&\leq\pas{\log\pas{\frac{\log x}{\log k}}+\frac{1}{\log^2x}+\frac{1}{2\log^2k}}k
	\end{align*}
	and
	\begin{align*}
		\sum_{k<p\leq x}\sum_{n=2}^\infty\frac{1}{n}\pas{\frac{k}{p}}^n
		&=\sum_{n=2}^\infty\frac{k^n}{n}\sum_{k<p\leq x}\frac{1}{p^n}<\sum_{n=2}^\infty\frac{k^n}{n}\sum_{m=k+1}^\infty\frac{1}{m^n}\\
		&<\sum_{n=2}^\infty\frac{k^n}{n}\pas{\frac{1}{\pas{k+1}^n}+\frac{1}{n-1}\frac{1}{\pas{k+1}^{n-1}}}\\
		&<k\sum_{n=2}^\infty\frac{1}{n}\pas{\frac{1}{n+k}+\frac{1}{n-1}}\\
		&=k+O\pas{\log k},
	\end{align*}
	we have
	\begin{align*}
		\log\Pi_3\geq-k\pas{\log\pas{\frac{\log x}{\log k}}+\frac{1}{\log^2x}+\frac{1}{2\log^2k}}-k+O\pas{\log k}.
	\end{align*}
	Therefore
	\begin{align*}
		\frac{1}{k}\log B_k(x)
		&=\frac{1}{k}\pas{\pas{k-1}\log2+\log\Pi_1+\log\Pi_2+\log\Pi_3}\\
		&>\frac{k-1}{k}\log2++\log\frac{e^\gamma}{2}+\log\log x+O\pas{\frac{1}{\log^2x}}+\frac{\log2}{k}-1-\frac{1}{2\log k}\\
		&\quad-\log\pas{\frac{\log x}{\log k}}-\frac{1}{\log^2x}-\frac{1}{2\log^2k}-1+O\pas{\frac{\log k}{k}}\\
		&=\log\log k+\gamma-2-\frac{1}{2\log k}-\frac{1}{2\log^2k}+O\pas{\frac{1}{\log^2x}}+O\pas{\frac{\log k}{k}}
	\end{align*}
	and we have
	\begin{align*}
		\frac{1}{k}\log B_k\geq \log\log k+\gamma-2-\frac{1}{2\log k}-\frac{1}{2\log^2k}+O\pas{\frac{\log k}{k}}
	\end{align*}
	as $x\to\infty$. We next consider the upper bound of $B_k(x)$ in $k$. Write
	\begin{align*}
		B_k(x)
		&=2^{k-1}\pas{\prod_{2<p\leq x}\pas{1-\frac{1}{p}}^{-k}}\pas{\prod_{2<p\leq k}\times\prod_{k<p\leq 2^k}\times\prod_{2^k<p\leq x}}\pas{1-\frac{\min\Pas{k,\textup{ord}\pas{p;2}}}{p}},\\
		&=:2^{k-1}\Pi_1\times\Pi_2\times\Pi_3\times\Pi_4
	\end{align*}
	say. Then $\log\Pi_1, \log\Pi_2$ and $\log\Pi_4$ are evaluated as
	\begin{align*}
		\log\Pi_1&
		=k\log\frac{e^\gamma}{2}+k\log\log x+O\pas{\frac{k}{\log^2x}},\\
		\log\Pi_2&
		\leq\log\prod_{2<p\leq k}\pas{1-\frac{1}{p}}=O\pas{\log\log k},\\
		\log\Pi_4&
		\leq\log\prod_{2^k<p\leq x}\pas{1-\frac{k}{p}}=-\sum_{2^k<p\leq x}\sum_{n=1}^\infty\frac{1}{n}\pas{\frac{k}{p}}^n<-k\sum_{2^k<p\leq x}\frac{1}{p}\\
		&<-k\pas{\log\pas{\frac{\log x}{k\log2}}-\frac{1}{k^2\log2}-\frac{1}{2\log^2x}}\\
		&=k\log\pas{\frac{k\log2}{\log x}}+\frac{k}{2\log^2x}+O\pas{\frac{1}{k}},
	\end{align*}
	since $\textup{ord}\pas{p;2}\geq k$ where $p>2^k$. If $p>2^{j-1}$ for some $j\in\N$, then $\textup{ord}\pas{p;2}\geq j$. Thus, putting $y=\log k/\log2$, we obtain
	\begin{align*}
		\log\Pi_3
		&\leq\log\prod_{y<j\leq k}\prod_{2^{j-1}<p\leq 2^j}\pas{1-\frac{j}{p}}\\
		&<\log\prod_{y<j\leq k}\pas{\frac{j-1}{j}}^j\exp\pas{\frac{1}{2j\log^22}+\frac{j}{\pas{j-1}^2\log^22}}\\
		&=\log\pas{\frac{\floor{y}^{\floor{y}}}{\pas{\floor{y}-1}!}\cdot\frac{\pas{k-1}!}{k^k}}+\frac{1}{\log^22}\sum_{y<j\leq k}\pas{\frac{1}{2j}+\frac{j}{\pas{j-1}^2}}.
	\end{align*}
	Applying $\log(n^n/n!)=n+O\pas{\log n}$, we find that the right-hand side of the above is
	\begin{align*}
		=\log\frac{k!}{k^k}+O\pas{\log k}=-k+O\pas{\log k}.
	\end{align*}
	Therefore, we have
	\begin{align*}
		\frac{1}{k}\log B_k(x)
		&=\frac{1}{k}\pas{\pas{k-1}\log2+\log\Pi_1+\log\Pi_2+\log\Pi_3+\log\Pi_4}\\
		&\leq\frac{k-1}{k}\log2+\log\frac{e^\gamma}{2}+\log\log x+O\pas{\frac{1}{\log^2x}}+O\pas{\frac{\log\log k}{k}}\\
		&\quad-1+O\pas{\frac{\log k}{k}}+\log\pas{\frac{k\log2}{\log x}}+\frac{1}{2\log^2x}+O\pas{\frac{1}{k^2}}\\
		&=\log k+\gamma+\log\log2-1+O\pas{\frac{1}{\log^2x}}+O\pas{\frac{\log k}{k}}.
	\end{align*}
\end{proof}

Let $k(N)$ be the maximum of the length of the Cunningham chain which is generated by $p\leq N$, that is,
\[
k(N)=\max\Pas{l(p)\colon p\leq N}.
\]
Let $f(k)=\frac{1}{k}\log B_k$. Then we have
\[
B_{k(N)}\frac{N}{\pas{\log N}^{k(N)}}=\pas{N^{\frac{1}{k(N)}}\frac{f(k(N))}{\log N}}^{k(N)}=:F(N)^{k(N)}.
\]
Suppose that there exists a sequence $\{N_n\}_{n=1}^\infty$ satisfying
\[
\pas{1-F(N_n)}k(N_n)\gg1.
\]
Then, there exists a constant $C>0$ which satisfies $F(N_n)\leq1-C/k(N_n)$. Thus
\begin{align*}
	\limsup_{n\to\infty}B_{k(N_n)}\frac{N_n}{\pas{\log N_n}^{k(N_n)}}
	&\leq \lim_{n\to\infty}\pas{1-\frac{C}{k(N_n)}}^{k(N_n)}\\
	&=\begin{cases}
		e^{-C}		&\text{if}	\quad	\lim_{n\to\infty}k(N_n)=+\infty,\\
		\max_{n\in\N}\pas{1-\frac{C}{k(N_n)}}^{k(N_n)}		&\text{if}	\quad	\lim_{n\to\infty}k(N_n)<+\infty
	\end{cases}\\
	&<1.
\end{align*}
However, unless the order with respect to $k$ of the terms that vanish by approximation of Conjecture.\ref{cjt:Caldwell} is small, this result contradicts Conjecture.\ref{cjt:Caldwell} and the maximality of $k(N)$. Therefore we may expect that $\lim_{N\to\infty}(1-F(N))k(N)=0$. In particular, since $\lim_{N\to\infty}F(N)=1$, we have $F(N)<2$ for sufficiently large $N$. Thus
\[
\frac{\log N}{k(N)}+\log f(k(N))-\log\log N<\log2.
\]
From Proposition.\ref{prop:order of Bk} and $l(2)\geq3$, we have
\[
\log f(k(N))>\log\log k(N)-2>\log\log3-2.
\]
Therefore, we obtain
\[
k(N)\geq\pas{1+o(1)}\frac{\log N}{\log\log N}.
\]
From this, we may conjecture the following:

\begin{cjt}\label{cjt:omega order of Cunningham chains}
	\[
	l(p)=\Omega\pas{\frac{\log p}{\log\log p}}\quad \textup{ on }\P.
	\]
\end{cjt}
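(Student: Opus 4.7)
The plan is to give a conditional proof, assuming Caldwell's Conjecture \ref{cjt:Caldwell}, by formalizing the heuristic computation that immediately precedes the statement.

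I would argue by contrapositive. Suppose $l(p) = o(\log p/\log\log p)$, so that for every fixed $\ep>0$ the quantity $k(N)=\max\{l(p):p\leq N\}$ satisfies $k(N) < (1-\ep)\log N/\log\log N$ once $N$ is large enough. Setting $k_0 = k_0(N) := \floor{(1-\ep)\log N/\log\log N}$, this forces
\[
\#\{p\leq N : l(p)\geq k_0\} = 0
\]
for all such $N$. On the other hand, Conjecture \ref{cjt:Caldwell} predicts the same count to be $(1+o(1))B_{k_0} N/\log^{k_0} N$. Taking logarithms, applying the lower bound $\log B_k/k \geq \log\log k + \gamma - 2 + O(1/\log k)$ from Proposition \ref{prop:order of Bk}, and noting the elementary estimate $k_0\log\log N = (1-\ep)\log N + O(\log\log N)$, one obtains
\[
\log\pas{B_{k_0}\,\frac{N}{\log^{k_0}N}} \;\geq\; k_0\log\log k_0 + \ep\log N + O(k_0).
\]
Since $k_0\to\infty$ and $\ep>0$ is fixed, the right hand side tends to $+\infty$, so $B_{k_0}N/\log^{k_0}N\to\infty$ — in flat contradiction with its being zero. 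This rules out $l(p)=o(\log p/\log\log p)$ and yields the desired $\Omega$-bound.

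The main obstacle is twofold. First, Conjecture \ref{cjt:Caldwell} is itself open, so any argument along these lines is genuinely conditional. Second, even granting Caldwell in its stated form, one needs sufficient uniformity of the asymptotic as $k$ grows with $N$ — specifically, an explicit error term that remains small when $k$ is of order $\log N/\log\log N$. Such uniformity is not part of the Bateman-Horn framework as ordinarily stated, and any unconditional proof would almost certainly require either a quantitative refinement of Bateman-Horn or a genuinely new ingredient, perhaps exploiting the Fibonacci correspondence $\fdf$ developed later in this paper or a sieve-theoretic construction producing long chains at specially chosen primes.
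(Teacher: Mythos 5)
Your proposal takes essentially the same route as the paper: the statement is left there as a conjecture, supported by exactly this conditional computation — Caldwell's Conjecture \ref{cjt:Caldwell} combined with the lower bound of Proposition \ref{prop:order of Bk} to force $k(N)\geq(1+o(1))\log N/\log\log N$ — together with the same caveat you flag about uniformity of the asymptotic when $k$ grows with $N$. Your contrapositive formalization is sound as a conditional argument (up to the harmless replacement of $k_0$ by $k_0+1$ so that the count $\#\{p\leq N: l(p)\geq k_0\}$ is genuinely zero), so it reproduces the paper's own heuristic rather than proving anything beyond it.
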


In \cite{Augustin} it is reported that, $p_1:=2759832934171386593519$ is the first term of the longest first Cunningham chain in the data up to $2020$. Its length is $17$. And $p_2=42008163485623434922152331$ is the first term of the longest second whose length is $19$. Thus
\begin{align*}
	l_1(p_1)=17, \;\frac{\log p_1}{\log\log p_1}\simeq12.661,\\
	l_{-1}(p_2)=19, \;\frac{\log p_2}{\log\log p_2}\simeq14.470.
\end{align*}
In this way, a better estimation of $B_k$ implies a better bound of the length of Cunningham chains. However, it is still unknown whether $\limsup_{p\to\infty}l(p)/p=0$ or not.

In this paper, by using a generalized Fibonacci sequence, we get $l(p)\ll\log p$ under a certain condition $(Theorem.\ref{thm:related with CC})$. It seems that this sufficient condition is plausible by numerical test. The condition can be extended from prime numbers to natural numbers $(Corollary.\ref{crl:related with CC})$. This implies that the problem of upper estimation of $l(p)$ is reduced to that on natural numbers. One of the benefit of this is that we can use methods of number theory to solve it.

\textbf{Acknowledgement.} I wish to express my gratitude to Professor Kohji Matsumoto and Dr. Sh\={o}ta Inoue for their advice towards the present research. I am also thankful to Mr. Yusei Ishida for assisting in writing computer programs. And I thank Dr. Kenta Endo, Dr. Kota Saito, Mr. Yuichiro Toma, Mr. Hideki Hunakura and Mr. Tomohiro Fukada for many useful discussions.

\section{$\Falpha$ numbers}
\begin{dfn}
	Let $\alpha\geq1$ be a natural number. A sequence $\Falpha$:
	\[
	F_0=0, \;F_1=1, \;F_{n+1}=\alpha F_n+F_{n-1}\quad \pas{n\in\Z}
	\]
	is called the generalized Fibonacci sequence, and its elements are called $\Falpha$ numbers.
	For example, enumerating $F_{-5}$ through $F_5$ we have
	\[
	\cdots,\alpha^4+3\alpha^2+1,-\alpha^3-2\alpha,\alpha^2+1,-\alpha,1,0,1,\alpha,\alpha^2+1,\alpha^3+2\alpha,\alpha^4+3\alpha^2+1,\cdots.
	\]
\end{dfn}

The following facts on $\Falpha$ numbers are well known. For example, in \cite{Koshy}, T. Koshy showed the case $\alpha=1$ of $(a)$ in Section 5.2 pp.88-90, $(b)$ in Section 5 p.82, $(c)$ in Section 5.6 p.103, $(d)$ in Section 20.1 p.397, $(e)$ in Section 10.1 pp.171-173, and $(f)$ in Section 10.1 pp.173-174. We can similarly show the case $\alpha>1$.

\begin{fact}\label{fact:fact of Fa numbers}
	Let $m,n$ be integers, and put $\valpha=(\alpha+\sqrt{\alpha^2+4})/2$. We have:
	\begin{itemize}
		\item[$(a)$] $\displaystyle F_n=\frac{1}{\sqrt{\alpha^2+4}}\pas{\valpha^n-\pas{-\valpha}^{-n}},$
		\item[$(b)$] $\displaystyle \sum_{i=1}^{n}F_i=\pas{1+\frac{1}{\alpha}}F_n+\frac{F_{n-1}-1}{\alpha}<\pas{1+\frac{2}{\alpha}}F_n\quad \pas{n\geq1},$
		\item[$(c)$] $\displaystyle F_{-n}=(-1)^{n+1}F_n,$
		\item[$(d)$] $\displaystyle F_{m+n}=F_{m+1}F_n+F_mF_{n-1},$
		\item[$(e)$] $\displaystyle m\mid n \iff F_m\mid F_n,$
		\item[$(f)$] $\displaystyle (F_m,F_n)=F_{(m,n)}.$
	\end{itemize}
\end{fact}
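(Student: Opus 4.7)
The six statements are standard facts for Lucas-type sequences; I will sketch the dependencies among them rather than reproduce the case $\alpha=1$ in Koshy. For (a), the characteristic equation $x^2=\alpha x+1$ has roots $\valpha$ and $\psi:=(\alpha-\sqrt{\alpha^2+4})/2$, and a short computation gives $\psi=-1/\valpha$. Writing the general solution $F_n=A\valpha^n+B\psi^n$ and imposing $F_0=0$, $F_1=1$ forces $A=-B=1/\sqrt{\alpha^2+4}$; substituting $\psi^n=(-\valpha)^{-n}$ yields the closed form. Statement (c) then falls out of (a) at once, or alternatively by backward induction using the rewritten recurrence $F_{n-1}=F_{n+1}-\alpha F_n$: given $F_{-(n-1)}=(-1)^n F_{n-1}$ and $F_{-n}=(-1)^{n+1}F_n$, computing $F_{-(n+1)}=F_{-(n-1)}-\alpha F_{-n}=(-1)^n(F_{n-1}+\alpha F_n)=(-1)^{n+2}F_{n+1}$ closes the step.

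For (d) I would fix $m$ and induct on $n$: the cases $n=0,1$ are immediate, and the inductive step expands $F_{m+n+1}=\alpha F_{m+n}+F_{m+n-1}$, applies the hypothesis to both summands, and regroups as $F_{m+1}(\alpha F_n+F_{n-1})+F_m(\alpha F_{n-1}+F_{n-2})=F_{m+1}F_{n+1}+F_mF_n$. Statement (b) is another short induction on $n$: the equality reduces via $F_{n+1}=\alpha F_n+F_{n-1}$ to the previous case, and the strict upper bound uses only $F_{n-1}\leq F_n$ to replace $(F_{n-1}-1)/\alpha$ by a quantity strictly smaller than $F_n/\alpha$.

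The main content lies in (e)--(f), and I would prove (f) first. The key lemma is $(F_m,F_{m+1})=1$ for every $m\geq1$, which follows by induction from $(F_{m+1},F_{m+2})=(F_{m+1},\alpha F_{m+1}+F_m)=(F_m,F_{m+1})$ with base case $(F_1,F_2)=(1,\alpha)=1$. Given this, (d) with $n$ replaced by $n-m$ gives $F_n=F_{m+1}F_{n-m}+F_mF_{n-m-1}$, whence $(F_m,F_n)=(F_m,F_{m+1}F_{n-m})=(F_m,F_{n-m})$; iterating is exactly the Euclidean algorithm on the indices and terminates at $F_{(m,n)}$. For (e), the implication $m\mid n\Rightarrow F_m\mid F_n$ is a one-line induction on $n/m$ using (d), and the converse is immediate from (f) together with strict monotonicity of $(F_k)_{k\geq1}$ for $\alpha\geq2$, which forces $F_m=F_{(m,n)}\Rightarrow m=(m,n)$. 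The only genuine obstacle is bookkeeping: one must make sure strict positivity and monotonicity of $F_k$ for $k\geq 1$ are established before invoking them in (e), and that the index-reduction for (f) stays within the range in which (d) was proved, which is precisely why (a)--(d) should be settled before attacking (e) and (f).
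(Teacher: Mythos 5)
Your proposal is correct; the paper itself offers no proof of this Fact (it cites Koshy for $\alpha=1$ and asserts the case $\alpha>1$ is analogous), and your sketch is exactly that standard argument: Binet-type closed form, the index-addition identity, coprimality of consecutive terms, and Euclidean descent on indices for $(F_m,F_n)=F_{(m,n)}$. The only loose ends are bookkeeping: the Fact is stated for all integers $m,n$, so the inductions for $(c)$ and $(d)$ should be run in both directions (or deduced directly from $(a)$), and your requirement $\alpha\geq2$ for strict monotonicity in $(e)$ is consistent with the paper's own caveat that $m=2$ must be excluded when $\alpha=1$.
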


If $\alpha=1$, then $m$ should not be $2$. Indeed, $F_2=1, F_3=2$ thus $F_2\mid F_3$, but $2\nmid3$. We next define the divisor function on $\Falpha$.

\begin{dfn}
	In this article, a natural number $d$ is called a $\Falpha$ divisor of $n$ if $d\in\Falpha$ and $d\mid n$. A map $\fdf:\N\to \C$ is defined by
	\[
	\FDF{n}:= \sum_{\substack{d\mid n \\ 0<d\in\Falpha}}d
	\]
	for $n\in\N$, and called the $\Falpha$ divisor function. Let $\fdf^1=\fdf$, and $\fdf^k=\fdf\circ\fdf^{k-1}$ for $k>1$.
\end{dfn}

In Section.$3$, we will investigate the relationship between the iteration of $\fdf$ and Cunningham chains.

\begin{ex}
	Suppose $\alpha=3$. Since
	\[
	\Falpha: \cdots, 0, 1, 3, 10, 33, 109, 360, \cdots,
	\]
	we get
	\begin{align*}
		\FDF{2}&=1, \;\FDF{3}=4, \;\FDF{4}=1, \\
		\FDFk{3}{109}&=\FDFk{2}{110}=\FDF{11}=1.
	\end{align*}
\end{ex}

We consider the Dirichlet series associated with $\fdf$. Put $\zeta_\alpha(s)=\sum_{0<n\in\Falpha}n^{-s}$ for $\alpha\geq1$. Then $\zeta_\alpha(s)$ converges for all $s$ with $\Re(s)>0$. Suppose $f(n)$ is $n$ if $n\in\Falpha$, and is $0$ otherwise. Then
\begin{align}\label{eq:Dirichlet product}
	\zeta(s)\zeta_\alpha(s-1)=\pas{\sum_{m=1}^\infty\frac{1}{m^s}}\pas{\sum_{0<n\in\Falpha}\frac{1}{n^{s-1}}}=\sum_{m,n=1}^\infty\frac{f(n)}{(mn)^s}=\sum_{n=1}^\infty\frac{1}{n^s}\sum_{\substack{d\mid n \\ 0<d}}f(d)=\sum_{n=1}^\infty\frac{\FDF{n}}{n^s}
\end{align}
for $\Re(s)>1$. As we can see from this expression, the research of $\fdf$ will be useful for the study of $\zeta_\alpha$. In particular, $\zeta_1$ is called the Fibonacci zeta function by Egami\cite{Egami} and Navas\cite{Navas}, and it is a meromorphic function on $\C$. It is the famous unsolved problem whether $\zeta_1(1)$ is transcendental or not.

Hereafter, we suppose that $\alpha\geq3$ unless explicitly stated otherwise. Let $\ind{n}$ be the index of the maximal $\Falpha$ number $\leq n$ for $n\in\N$, that is, if we take $k$ satisfying $F_k\leq n<F_{k+1}$, then $\ind{n}=k$. Since the sequences $\Falpha$ are increasing for arbitrary $\alpha\geq3$, $\ind{n}$ is determined uniquely for all $n$. In addition, $\ind{n}$ is not decreasing for $n$.

\begin{lem}\label{lem:there exists a k s.t. FDFk(n)=1}
	Let $k^\prime:=\ind{\FDF{n}}$. Then $k^\prime\leq\ind{n}$ for all $n$, and if $\FDF{n}\neq1$, then
	\[
	F_{k^\prime}<\FDF{n}<\pas{1+\frac{2}{\alpha}}F_{k^\prime}.
	\]
	In particular, $F_{k^\prime}\nmid\FDF{n}$.
\end{lem}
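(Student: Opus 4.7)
The plan is to first establish $k' \leq \ind{n}$ by bounding $\FDF{n}$ from above. Write $k = \ind{n}$, so $F_k \leq n < F_{k+1}$. Every positive $\Falpha$-divisor of $n$ is at most $n < F_{k+1}$, hence lies among $F_1, F_2, \ldots, F_k$. Therefore $\FDF{n} \leq \sum_{i=1}^{k} F_i$, and by Fact \ref{fact:fact of Fa numbers}$(b)$ this is strictly less than $(1+2/\alpha) F_k$. Since $\alpha \geq 3$, the recurrence gives $F_{k+1} \geq \alpha F_k \geq 3 F_k > (1+2/\alpha) F_k$, so $\FDF{n} < F_{k+1}$. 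By the definition of $\ind{\cdot}$ this yields $k' = \ind{\FDF{n}} \leq k = \ind{n}$.

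Next, suppose $\FDF{n} \neq 1$. List the positive $\Falpha$-divisors of $n$ as $F_{i_1} < F_{i_2} < \cdots < F_{i_r}$. Since $1 = F_1$ divides every natural number, $i_1 = 1$ and $F_{i_1} = 1$. The assumption $\FDF{n} \neq 1$ forces $r \geq 2$, so in particular $F_{i_r} \geq F_2 = \alpha$. Then
\[
	\FDF{n} = 1 + F_{i_2} + \cdots + F_{i_r} > F_{i_r},
\]
and on the other hand
\[
	\FDF{n} \leq \sum_{i=1}^{i_r} F_i < \pas{1+\frac{2}{\alpha}} F_{i_r} < F_{i_r+1},
\]
again using Fact \ref{fact:fact of Fa numbers}$(b)$ and the bound $(1+2/\alpha) F_j < F_{j+1}$ for $\alpha \geq 3$. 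These two inequalities together identify $k' = i_r$ and give
\[
	F_{k'} < \FDF{n} < \pas{1+\frac{2}{\alpha}} F_{k'}.
\]

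Finally, for the non-divisibility, observe that $\alpha \geq 3$ implies $1 + 2/\alpha \leq 5/3 < 2$. Hence $\FDF{n}/F_{k'}$ lies strictly between $1$ and $2$, so it cannot be an integer, which proves $F_{k'} \nmid \FDF{n}$.

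There is no real obstacle in this argument; the whole thing rests on the uniform bound $(1+2/\alpha) F_j < F_{j+1}$ coming from $\alpha \geq 3$, together with the partial-sum estimate in Fact \ref{fact:fact of Fa numbers}$(b)$. The only mild care needed is to notice that $F_1 = 1$ is always present in the sum (so $r \geq 2$ when $\FDF{n} \neq 1$), which is what produces the strict inequality $F_{k'} < \FDF{n}$ rather than $F_{k'} \leq \FDF{n}$.
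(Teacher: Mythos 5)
Your proof is correct and follows essentially the same route as the paper: both arguments sandwich $\FDF{n}$ between the largest $\Falpha$ divisor $F_{i_r}$ and $\pas{1+\frac{2}{\alpha}}F_{i_r}<F_{i_r+1}$ using Fact~\ref{fact:fact of Fa numbers}$(b)$, which pins down $k^\prime=i_r$ and rules out divisibility since the ratio lies strictly between $1$ and $2$. The only cosmetic difference is that you establish $k^\prime\leq\ind{n}$ by a separate summation bound up to $\ind{n}$, whereas the paper gets it directly from $F_{i_r}\leq n$, so $i_r=\ind{F_{i_r}}\leq\ind{n}$.
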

\begin{proof}
	$k^\prime=1\leq\ind{n}$ if $\FDF{n}=1$. Suppose that $\FDF{n}>1$. Let $i_0$ be the maximal $i$ with $F_i\mid n$. Then $i_0>1$ and $n$ has at least two $\Falpha$ divisors $F_{i_0}$ and $F_1$. Thus we estimate that
	\begin{align*}
		F_{i_0}<F_{i_0}+F_1\leq\FDF{n}\leq \sum_{i=1}^{i_0}F_i<\pas{1+\frac{2}{\alpha}}F_{i_0}<2F_{i_0}<F_{i_0+1}
	\end{align*}
	from Fact.\ref{fact:fact of Fa numbers} $(b)$. This implies that $k^\prime=i_0=\ind{F_{i_0}}\leq\ind{n}$.
\end{proof}

Let $\FDF{\N}$ be the set of all $\FDF{n}$ for $n\in\N$.

\begin{thm}\label{thm:there exists a k s.t. FDFk(n)=1}
	For every natural number $n$, there exists a non-negative integer $k\leq\log n /\log\valpha +2$ such that $\FDFk{k}{n}=1$.
\end{thm}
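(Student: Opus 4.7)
The plan is to introduce the auxiliary quantity $g(n) := \max\{i \geq 1 : F_i \mid n\}$ for $n \in \N$, which is well-defined because $F_1 = 1$ divides every natural number. Note that $\FDF{n} = 1$ if and only if $g(n) = 1$, since $\FDF{n} = 1$ exactly when $1$ is the only positive $\Falpha$-divisor of $n$. In this language the theorem amounts to showing that iterating $\fdf$ drives $g$ down to $1$ quickly enough.

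The central step is to extract from Lemma~\ref{lem:there exists a k s.t. FDFk(n)=1} the strict inequality $g(\FDF{n}) < g(n)$ whenever $g(n) \geq 2$. Indeed, when $g(n) \geq 2$ we have $\FDF{n} > 1$, and the proof of the lemma identifies $i_0 := g(n)$ with $k' := \ind{\FDF{n}}$ via the chain $F_{g(n)} < \FDF{n} < (1+2/\alpha)F_{g(n)} < F_{g(n)+1}$. The ``in particular'' clause $F_{k'} \nmid \FDF{n}$ then reads $F_{g(n)} \nmid \FDF{n}$, so the maximal $i$ with $F_i \mid \FDF{n}$ must be strictly smaller than $g(n)$, i.e., $g(\FDF{n}) < g(n)$.

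Setting $m_0 = n$ and $m_{j+1} = \FDF{m_j}$, this strict decrease forces $g(m_0) > g(m_1) > \cdots$ for as long as $g(m_j) \geq 2$. After at most $g(n) - 1$ such steps we must reach some $g(m_j) = 1$, and then $m_{j+1} = 1$. Consequently $\FDFk{k}{n} = 1$ for some $k \leq g(n) \leq \ind{n}$.

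Finally I would bound $\ind{n}$ in terms of $\log n / \log \valpha$. Using $F_{m+1} = \alpha F_m + F_{m-1}$ together with the identity $\valpha^2 = \alpha\valpha + 1$, a short induction gives $F_m \geq \valpha^{m-2}$ for every $m \geq 1$: the base cases $F_1 = 1 \geq \valpha^{-1}$ and $F_2 = \alpha \geq 1$ are clear, and the inductive step reads $F_{m+1} \geq \alpha\valpha^{m-2} + \valpha^{m-3} = \valpha^{m-3}(\alpha\valpha+1) = \valpha^{m-1}$. Applied to $m = \ind{n}$ with $F_{\ind{n}} \leq n$, this yields $\ind{n} \leq \log n/\log\valpha + 2$, which closes the argument. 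The only potentially subtle point is the strict-decrease step, and this is supplied precisely by the ``in particular'' clause of Lemma~\ref{lem:there exists a k s.t. FDFk(n)=1}.
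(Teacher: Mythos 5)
Your proposal is correct and follows essentially the same route as the paper: Lemma~\ref{lem:there exists a k s.t. FDFk(n)=1} supplies the strict one-step decrease of a monovariant under iteration of $\fdf$ (you track the largest index of an $\Falpha$-divisor of $n$, while the paper tracks $\ind{a}$ for $a$ in the image of $\fdf$), and the number of iterations is then bounded by $\ind{n}\leq\log n/\log\valpha+2$. Your inductive bound $F_m\geq\valpha^{m-2}$ replaces the paper's appeal to the closed-form expression of Fact~\ref{fact:fact of Fa numbers}~$(a)$, but this is only a cosmetic difference.
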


In Section.$4$, we will improve this bound to $k\ll \log\log n$.

\begin{proof}
	Let $a$ be an element of $\FDF{\N}$, and let $k=\ind{a}$. If $k=1$, then $\FDFk{0}{a}=a=1$. In addition, since
	\[
	F_2+F_1=\alpha+1\not\equiv0\pmod\alpha,
	\]
	we get $\FDF{a}=1$ if $k=2$. Suppose that $k\geq3$. Since $F_k\nmid a$ from Lemma.\ref{lem:there exists a k s.t. FDFk(n)=1}, we have
	\[
	\FDF{a}\leq \sum_{i=1}^{k-1}F_i<\pas{1+\frac{2}{\alpha}}F_{k-1}<F_k.
	\]
	Thus $\ind{\FDF{a}}\leq k-1$. By iterating this argument, we have $\ind{\FDFk{k-2}{a}}\leq2$. Therefore $\FDFk{k-1}{a}=1$. This also holds for $k=1,2$. Let $k^\prime=\ind{\FDF{n}}$ for arbitrary $n$. We have
	\[
	\FDFk{k^\prime}{n}=\FDFk{k^\prime-1}{\FDF{n}}=1.
	\]
	Since $k^\prime\leq\ind{n}$ from Lemma.\ref{lem:there exists a k s.t. FDFk(n)=1}, this leads to $\FDFk{\ind{n}}{n}=1$. On the other hand, since
	\[
	\valpha^{\ind{n}}=F_{\ind{n}}\sqrt{\alpha^2+4}+\pas{-\valpha}^{-\ind{n}}
	\]
	from Fact.\ref{fact:fact of Fa numbers} $(a)$ and $F_{\ind{n}}\leq n$, we obtain that
	\[
	\ind{n}\leq \frac{1}{\log\valpha}\pas{\log n+\log\pas{\sqrt{\alpha^2+4}+\frac{\pas{-\valpha}^{-\ind{n}}}{n}}}.
	\]
	Now, $n$ is a natural number and $\valpha=(\alpha+\sqrt{\alpha^2+4})/2$ is bigger than $\alpha\geq3$. Thus
	\begin{align*}
		\frac{1}{\log\valpha}\log\pas{\sqrt{\alpha^2+4}+\frac{\pas{-\valpha}^{-\ind{n}}}{n}}
		&<\frac{1}{\log\alpha}\log\pas{\sqrt{\alpha^2+4}+\frac{1}{\alpha}}\\
		&=\frac{1}{\log\alpha}\log\pas{\sqrt{1+\frac{4}{\alpha^2}}+\frac{1}{\alpha^2}}+1\\
		&<\flog{\alpha}\log\pas{1+\frac{5}{\alpha^2}}+1<2.
	\end{align*}
	This implies $\ind{n}<\log n/\log\valpha+2$.
\end{proof}

Here, we define
\[
\ord{n}=\min\Pas{k\in\Z_{\geq0}\colon \fdf^k(n)=1}.
\]
In Section.$4$, we will find that a better estimation of $\ord{n}$ for $n$ implies a better bound of the length of Cunningham chains.

\section{The relation between $\Falpha$ numbers and the Cunningham chains}
In this section, let $m, n$ be natural numbers. From Fact.\ref{fact:fact of Fa numbers} $(c)$ and $(d)$,
\begin{align*}
	F_{m+n}+F_{m-n}
	&=F_{m+1}F_n+F_mF_{n-1}+F_mF_{-n+1}+F_{m-1}F_{-n}\\
	&=\pas{1+(-1)^n}F_mF_{n-1}+\pas{F_{m+1}+(-1)^{n-1}F_{m-1}}F_n\\
	&=\begin{cases}
		F_n\pas{F_{m+1}+F_{m-1}} & \pas{n \textup{ is odd}},\\
		2F_mF_{n-1}+\pas{F_{m+1}-F_{m-1}}F_n & \pas{n \textup{ is even}}.
	\end{cases}
\end{align*}
Since $2F_{m+1}F_n=F_{m+1}F_n+\alpha F_mF_n+F_{m-1}F_n$, we have
\begin{align}\label{eq:summation formula-1}
	F_{m+n}+F_{m-n}=
	\begin{cases}
		F_n\pas{F_{m+1}+F_{m-1}} & \pas{n \textup{ is odd}},\\
		F_m\pas{F_{n-1}+F_{n+1}} & \pas{n \textup{ is even}}.
	\end{cases}
\end{align}
Note that $F_{m+1}+F_{m-1}=F_{2m}/F_m$ from Fact.\ref{fact:fact of Fa numbers} $(d)$.

\begin{lem}\label{lem:divisors of F(m+1)+F(m-1)}
	The $\Falpha$ divisors of $F_{m+1}+F_{m-1}$ are at most $1$ and $\alpha$. Moreover, $\alpha\mid F_{m+1}+F_{m-1}$ if and only if $m\equiv1\pmod2$, that is,
	\begin{align*}
		\FDF{F_{m+1}+F_{m-1}}=
		\begin{cases}
			1               & (m\textup{ is even}), \\
			\alpha+1       & (m\textup{ is odd}).
		\end{cases}
	\end{align*}
\end{lem}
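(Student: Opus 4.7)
The plan is to introduce the companion ``Lucas-type'' sequence $L_m:=F_{m+1}+F_{m-1}$, which satisfies the same recurrence $L_{n+1}=\alpha L_n+L_{n-1}$ with $L_0=2$ and $L_1=\alpha$. Two identities drive the argument: first, $F_{2m}=F_mL_m$, already observed just before the lemma via Fact.\ref{fact:fact of Fa numbers} $(d)$; second, $L_n=\alpha F_n+2F_{n-1}$ (a direct consequence of the recurrence), which combined with $(F_{n-1},F_n)=F_{(n-1,n)}=F_1=1$ from Fact.\ref{fact:fact of Fa numbers} $(f)$ yields $(F_n,L_n)\mid 2$. Since $\alpha\geq 3$, no $\Falpha$ number equals $2$, so the only $\Falpha$ number dividing both $F_n$ and $L_n$ is $1$.

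I then classify all indices $k$ with $F_k\mid L_m$. From $F_{2m}=F_mL_m$ and Fact.\ref{fact:fact of Fa numbers} $(e)$ one gets $F_k\mid F_{2m}$, hence $k\mid 2m$. Writing $g=(k,m)$, $k=gK$, $m=gM$ with $(K,M)=1$, the relation $k\mid 2m$ forces $K\mid 2$, so $K\in\{1,2\}$. If $K=1$, then $k\mid m$, so $F_k\mid F_m$; combined with $F_k\mid L_m$ this gives $F_k\mid(F_m,L_m)\mid 2$, forcing $F_k=1$ and $k=1$. If $K=2$, then $k=2g$ and $M$ is odd; since $F_g\mid F_{2g}=F_k\mid L_{gM}$ and $F_g\mid F_{gM}$ (both by Fact.\ref{fact:fact of Fa numbers} $(e)$), I deduce $F_g\mid(F_{gM},L_{gM})\mid 2$, whence $F_g=1$, $g=1$, $k=2$, and $F_k=\alpha$.

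It remains to decide when $F_2=\alpha$ actually divides $L_m$. Reducing the recurrence modulo $\alpha$ gives $L_{n+1}\equiv L_{n-1}\pmod\alpha$, and the initial values $L_0=2$, $L_1=\alpha\equiv 0$ show that $\alpha\mid L_m$ if and only if $m$ is odd. Consequently the $\Falpha$ divisors of $F_{m+1}+F_{m-1}$ are exactly $\{1\}$ for even $m$ and $\{1,\alpha\}$ for odd $m$, and summing gives $\FDF{F_{m+1}+F_{m-1}}=1$ or $\alpha+1$ as claimed. The main obstacle, minor as it turns out, is the bookkeeping in the $K=2$ case: one must carefully chain $F_g\mid F_{2g}=F_k\mid L_{gM}$ with $F_g\mid F_{gM}$ before invoking the uniform gcd bound $(F_n,L_n)\mid 2$; all other steps are direct applications of Fact.\ref{fact:fact of Fa numbers}.
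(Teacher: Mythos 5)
Your proposal is correct and takes essentially the same route as the paper: both arguments rest on $F_k\mid F_{2m}\Rightarrow k\mid 2m$ (Fact~\ref{fact:fact of Fa numbers} $(e)$), the identity $F_{m+1}+F_{m-1}=\alpha F_m+2F_{m-1}$, and coprimality of consecutive $\Falpha$ numbers (Fact~\ref{fact:fact of Fa numbers} $(f)$) to force any $\Falpha$ divisor of $F_{2m}/F_m$ to divide $2$, which $\alpha\geq3$ then reduces to $1$ or $\alpha$. Your packaging via the companion sequence $L_m$ with the uniform bound $(F_n,L_n)\mid2$, and deciding $\alpha\mid L_m$ by the recurrence modulo $\alpha$ instead of the paper's gcd manipulation, are only organizational variations on the same proof.
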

\begin{proof}
	If $\alpha\mid2F_m$, then
	\[
	3\leq\alpha=\pas{\alpha,2F_m}\leq2\pas{\alpha,F_m}=2F_{\pas{2,m}},
	\]
	and hence $2\mid n$. Conversely, $\alpha\mid F_m$ if $m$ is even from Fact.\ref{fact:fact of Fa numbers} $(e)$. Therefore
	\[
	\alpha\mid 2F_m \iff m \equiv0 \pmod2.
	\]
	Also since $F_{m+1}+F_{m-1}=\alpha F_m+2F_{m-1}$, we find that
	\[
	\alpha\mid F_{m+1}+F_{m-1}\iff \alpha\mid2F_{m-1}\iff m\equiv1\pmod 2.
	\]
	Here, take a natural number $d$ that satisfies $F_d\mid F_{m+1}+F_{m-1}(=F_{2m}/F_m)$. Since $F_d\mid F_{2m}$, there exists a natural number $q$ such that $2m=qd$. Suppose that $d$ is odd. Then $q$ is even and $F_d\mid2F_{\frac{q}{2}d-1}$ since $F_d\mid F_{\frac{q}{2}d}$ from Fact.\ref{fact:fact of Fa numbers} $(e)$. In addition,
	\[
	\pas{F_d,F_{\frac{q}{2}d-1}}=F_{\pas{d,\frac{q}{2}d-1}}=F_1=1.
	\]
	Thus, we get $F_d\mid2$. This and $\alpha\geq3$ imply $d=1$. Let us next suppose that $d$ is even. Then $F_{\frac{d}{2}}\mid F_d$ from Fact.\ref{fact:fact of Fa numbers} $(e)$, and this implies $F_{\frac{d}{2}}\mid2F_{q\frac{d}{2}-1}$. Similarly, since $(F_{\frac{d}{2}}, F_{q\frac{d}{2}-1})=1$, we have $F_{\frac{d}{2}}\mid2$, and hence $d=2$. From these results, we conclude that there is no $d\geq3$ which satisfies $F_d\mid F_{m+1}+F_{m-1}$.
\end{proof}

\begin{thm}\label{thm:divisors of F(m+1)+F(m-1)}
	Let $p$ be a prime number. If $p\not=m<2p$, then
	\begin{align*}
		\FDF{F_{m+p}+F_{m-p}}=
		\begin{cases}
			\begin{cases}
				1  & \pas{m=1}\\
				F_3+1  & \pas{m=3}
			\end{cases}  & \pas{p=2},\\
			F_p+
			\begin{cases}
				1       & (m\textup{ is even})\\
				\alpha+1 & (m\textup{ is odd})
			\end{cases}.  & \pas{p\neq2}.
		\end{cases}
	\end{align*}
\end{thm}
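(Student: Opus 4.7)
The plan is to apply the summation formula $(\ref{eq:summation formula-1})$ to factor $F_{m+p}+F_{m-p}$ and then enumerate the $\Falpha$ divisors of the resulting product. For $p = 2$ the formula yields $F_m(F_3+F_1) = F_m(\alpha^2+2)$; for odd prime $p$ it gives $F_p(F_{m+1}+F_{m-1})$.

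When $p = 2$, the hypothesis $p \neq m < 2p$ leaves only $m \in \{1, 3\}$, and I can dispose of both by direct inspection. The products are $\alpha^2+2$ and $(\alpha^2+1)(\alpha^2+2)$ respectively; for each I list the small $\Falpha$ numbers and eliminate $F_2 = \alpha$ and $F_4 = \alpha(\alpha^2+2)$ by checking residues modulo $\alpha$, while $F_5$ and larger are ruled out by size.

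For odd $p$, I claim the positive $\Falpha$ divisors of $F_p(F_{m+1}+F_{m-1})$ are $\{F_1, F_p\}$ or $\{F_1, F_2, F_p\}$ according as $m$ is even or odd. Given an $\Falpha$ divisor $F_d$, I split on $(d, p)$. If $(d, p) = 1$, then Fact \ref{fact:fact of Fa numbers} $(f)$ gives $(F_d, F_p) = 1$, so $F_d \mid F_{m+1}+F_{m-1}$, and Lemma \ref{lem:divisors of F(m+1)+F(m-1)} forces $d \in \{1, 2\}$ with the correct parity condition on $m$. If $p \mid d$ I write $d = kp$; the case $k = 1$ is immediate. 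For $k \geq 3$ I use size: from Fact \ref{fact:fact of Fa numbers} $(d)$ applied as $F_{kp} = F_{(k-1)p+1}F_p + F_{(k-1)p}F_{p-1}$, one extracts $F_{kp}/F_p > F_{(k-1)p+1} \geq F_{2p+1}$, and since $m+2 \leq 2p+1$ and $F_{m+2} = \alpha F_{m+1}+F_m > F_{m+1}+F_{m-1}$ (because $\alpha \geq 3$), this exceeds $F_{m+1}+F_{m-1}$, so $F_{kp}/F_p$ cannot divide it.

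The delicate case, and the main obstacle, is $k = 2$, since for $p < m < 2p$ the size bound fails. Here the idea is to rewrite $F_{m+1}+F_{m-1} = F_{2m}/F_m$, turning $F_{2p} \mid F_p(F_{m+1}+F_{m-1})$ into $F_m F_{2p} \mid F_p F_{2m}$. Because $p$ is prime with $m \neq p$ and $1 \leq m < 2p$, one has $(p, m) = 1$, so Fact \ref{fact:fact of Fa numbers} $(f)$ yields $(F_{2p}, F_{2m}) = F_2 = \alpha$. Writing $F_{2p} = \alpha A$, $F_{2m} = \alpha B$ with $(A, B) = 1$ reduces the divisibility to $A \mid F_p$. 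But $A = F_p(F_{p+1}+F_{p-1})/\alpha$, and Lemma \ref{lem:divisors of F(m+1)+F(m-1)} says the quotient $(F_{p+1}+F_{p-1})/\alpha$ is a positive integer, necessarily strictly greater than $1$ for odd $p \geq 3$ because $F_{p+1} > \alpha$; hence $A > F_p$, contradicting $A \mid F_p$. Summing the two or three surviving divisors then yields the stated values.
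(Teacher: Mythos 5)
Your proof is correct, and in the key step it takes a genuinely different route from the paper. Both arguments begin the same way (apply $(\ref{eq:summation formula-1})$, dispose of $p=2$ by inspection, and for odd $p$ reduce every $\Falpha$ divisor $F_d$ of $F_p\pas{F_{m+1}+F_{m-1}}$ to the alternatives $(d,p)=1$, handled by Lemma \ref{lem:divisors of F(m+1)+F(m-1)}, or $d=kp$ with $F_{kp}/F_p\mid F_{2m}/F_m$). They diverge afterwards: the paper rules out $k\geq3$ via the monotonicity chain $F_{2m}/F_m\leq F_{2(2p-1)}/F_{2p-1}<F_{3p}/F_p$, and then kills $k=2$ in the range $p<m<2p$ by the identity $F_{2m}/F_m=\pas{F_{2p}/F_p}F_{r+1}+\pas{F_{2(p-1)}/F_{p-1}}F_r$ with $r=m-p$, bounding $F_{2p}/F_p\leq\pas{F_{2p},F_{2(p-1)}}\pas{F_{2p},F_r}\leq\alpha^2$ to reach a contradiction. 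You instead dismiss $k\geq3$ by the direct size bound $F_{kp}/F_p>F_{(k-1)p+1}\geq F_{2p+1}>F_{m+1}+F_{m-1}$, and treat $k=2$ uniformly for all $m$ coprime to $p$: clearing denominators to $F_mF_{2p}\mid F_pF_{2m}$, using $\pas{F_{2p},F_{2m}}=F_{2(p,m)}=\alpha$ to write $F_{2p}=\alpha A$, $F_{2m}=\alpha B$ with $(A,B)=1$, deducing $A\mid F_p$, and contradicting this because $A=F_p\pas{F_{p+1}+F_{p-1}}/\alpha\geq2F_p$ (the quotient is an integer $>1$ by Lemma \ref{lem:divisors of F(m+1)+F(m-1)} applied at the odd index $p$). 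Your version is shorter, avoids the auxiliary identity entirely, and does not need the case split $m<p$ versus $p<m<2p$; the paper's version, by contrast, stays within divisor-set bookkeeping ($D_\alpha$ of a product of coprime factors) and never needs the observation $\pas{F_{2p},F_{2m}}=\alpha$. One cosmetic point: in the $p=2$ cases your phrase ``ruled out by size'' does not literally cover $F_3\nmid\alpha^2+2$ (for $m=1$) and $F_5\nmid\pas{\alpha^2+1}\pas{\alpha^2+2}$ (for $m=3$), since these $\Falpha$ numbers are smaller than the target; what you need there is the trivial remark that the target lies strictly between $F_j$ and $2F_j$, which is immediate but should be said.
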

\begin{proof}
	Note that $F_{m+p}+F_{m-p}=F_p\cdot F_{2m}/F_m$ from $(\ref{eq:summation formula-1})$. If $p=2$,
	\begin{align*}
		F_{m+p}+F_{m-p}=
		\begin{cases}
			\alpha^2+2 	&		(m=1),\\
			\pas{\alpha^2+2}F_3.	&	(m=3).
		\end{cases}
	\end{align*}
	Suppose that $p>2$. We have
	\[
	\pas{F_p,\frac{F_{2m}}{F_m}}\leq\pas{F_p,F_{2m}}=F_{\pas{p,2m}}=F_1=1
	\]
	that is $(F_p, F_{2m}/F_m)=1$. Here, let $D_\alpha(n)$ be the set of all $\Falpha$ divisors of $n$. We find that
	\[
	D_\alpha(F_p)\cap D_\alpha\pas{\frac{F_{2m}}{F_m}}=\Pas{1}.
	\]
	And in general,
	\[
	D_\alpha\pas{F_p\frac{F_{2m}}{F_m}}\supset D_\alpha(F_p)\cup D_\alpha\pas{\frac{F_{2m}}{F_m}}.
	\]
	We will show the inclusion relation of the reverse direction. Take an arbitrary $F_s$ in $D_\alpha(F_p\cdot F_{2m}/F_m)$. Then there exists $a$ and $b$ satisfying $a\mid F_p, b\mid F_{2m}/F_m$ and $ab=F_s$. Since $(b,F_p)=1$,
	\[
	a=\pas{a,F_p}=\pas{a,F_p}\pas{b,F_p}=\pas{ab,aF_p,bF_p,F_p^2}=\pas{ab,\pas{a,b,F_p}F_p}=\pas{ab,F_p}=F_{\pas{s,p}}=1\textup{ or }F_p.
	\]
	In the case $a=1$, $F_s=b\in D_\alpha(F_{2m}/F_m)$ holds. Thus we have $ab\in\FDF{F_{2m}/F_m}$. Suppose that $a=F_p$. Then $k:=s/p$ is a natural number from Fact.\ref{fact:fact of Fa numbers} $(e)$, and $F_{kp}/F_p\mid F_{2m}/F_m$ holds. If $m<p$, then we have $k=1$ since $F_{kp}/F_p>F_{2m}/F_m$ for $k\geq2$. We consider the case $p<m<2p$. Note that
	\[
	\frac{F_{2n}}{F_n}=F_{n+1}+F_{n-1}\leq F_{n+2}+F_n=\frac{F_{2(n+1)}}{F_{n+1}}
	\]
	for all $n\in\N$. Thus we obtain that
	\begin{align*}
		\frac{F_{2m}}{F_m}\leq \frac{F_{2(m+1)}}{F_{m+1}}\leq \cdots\leq \frac{F_{2(2p-1)}}{F_{2p-1}}
		&=F_{2p}+F_{2p-2}\\
		&<F_{2p}+F_{2p-2}+F_{p-1}\frac{F_{2p}}{F_p}+F_{2p-3}\\
		&=\frac{\pas{F_p+F_{p-1}}F_{2p}+F_p\pas{F_{2p-2}+F_{2p-3}}}{F_p}\\
		&<\frac{F_{p+1}F_{2p}+F_pF_{2p-1}}{F_p}\\
		&=\frac{F_{3p}}{F_p}.
	\end{align*}
	This implies that $k\leq2$. We suppose $k=2$ and let $r=m-p$. Then $1\leq r<p$ and
	\begin{align*}
		\frac{F_{2m}}{F_m}
		&=F_{m+1}+F_{m-1}=F_{p+r+1}+F_{p+r-1}=F_{p+1}F_{r+1}+2F_pF_r+F_{p-1}F_{r-1}\\
		&=\pas{F_{p+1}+F_{p-1}}F_{r+1}+2F_pF_r+F_{p-1}F_{r-1}-F_{p-1}F_{r+1}\\
		&=\frac{F_{2p}}{F_p}F_{r+1}+F_pF_r+\pas{\alpha F_{p-1}+F_{p-2}}F_r+F_{p-1}F_{r-1}-\pas{\alpha F_{p-1}F_r+F_{p-1}F_{r-1}}\\
		&=\frac{F_{2p}}{F_p}F_{r+1}+\frac{F_{2(p-1)}}{F_{p-1}}F_r.
	\end{align*}
	Since $F_{2p}/F_p\mid F_{2m}/F_m$, we have $F_{2p}/F_p\mid F_{2(p-1)}/F_{p-1}\cdot F_r$. Thus
	\begin{align*}
		\frac{F_{2p}}{F_p}
		&=\pas{\frac{F_{2p}}{F_p},\frac{F_{2(p-1)}}{F_{p-1}}F_r}\leq\pas{\frac{F_{2p}}{F_p},\frac{F_{2(p-1)}}{F_{p-1}}}\pas{\frac{F_{2p}}{F_p},F_r}\\
		&\leq\pas{F_{2p},F_{2(p-1)}}\pas{F_{2p},F_r}=F_{2(p,p-1)}F_{\pas{2p,r}}\leq F_2^2=\alpha^2.
	\end{align*}
	This implies $F_{2p}\leq \alpha^2 F_p$ whose right-hand side does not exceed $F_{p+2}$, and hence $p\leq2$. However, this contradicts $p>3$, and we get $k=1$. In other words, $b=1$ in the case $a=F_p$, and then $ab\in D_\alpha(F_p)$. From these result, we have
	\begin{align*}
		D_\alpha(F_p)\cap D_\alpha\pas{\frac{F_{2m}}{F_m}}=\Pas{1} \text{ and } D_\alpha\pas{F_p\frac{F_{2m}}{F_m}}=D_\alpha(F_p)\cup D_\alpha\pas{\frac{F_{2m}}{F_m}},
	\end{align*}
	and hence
	\begin{align*}
		\FDF{F_{m+p}+F_{m-p}}
		=\sum_{d\in D_\alpha\pas{F_p\frac{F_{2m}}{F_m}}}d
		=\pas{\sum_{d\in D_\alpha\pas{F_p}}+\sum_{d\in D_\alpha\pas{\frac{F_{2m}}{F_m}}}}d-1
		=\FDF{F_p}+\FDF{\frac{F_{2m}}{F_m}}-1.
	\end{align*}
	Apply Lemma.\ref{lem:divisors of F(m+1)+F(m-1)} to complete the proof.
\end{proof}

From this theorem, we find the following corollary. That is a relationship between the iteration of $\fdf$ and Cunningham chains.

\begin{crl}\label{crl:divisors of F(2p+1)+1}
	For every odd prime $p$,
	\[
	\FDF{F_{2p\pm1}+1}=F_p+1.
	\]
	In particular, if $2p\pm1$ is also prime, then
	\begin{align}\label{crl:divisors of F(2p+1)+1-1}
		\FDFk{2}{F_{2p\pm1}}=\FDF{F_p}.
	\end{align}
	Further, by iterating this argument, we obtain
	\[
	l_{\pm1}(p)-1=\ord{F_{(p\pm1)2^{l_{\pm1}(p)-1}\mp1}}-\ord{F_p}.
	\]
\end{crl}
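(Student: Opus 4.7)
The plan is to read off the first identity directly from Theorem~\ref{thm:divisors of F(m+1)+F(m-1)}, deduce the second from the simple form of $\fdf(F_q)$ when $q$ is prime, and then iterate along the Cunningham chain to obtain the length formula.

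First I would specialize Theorem~\ref{thm:divisors of F(m+1)+F(m-1)} to $m = p \pm 1$. Because $p$ is an odd prime, $m$ is even, $m \neq p$, and $m < 2p$, so the hypotheses hold. By Fact~\ref{fact:fact of Fa numbers}~$(c)$ we have $F_{-1} = F_1 = 1$, hence $F_{m-p} = 1$ in both sign cases, and the theorem yields
\[
\FDF{F_{2p \pm 1} + 1} = F_p + 1.
\]
Next, for any odd prime $q$, Fact~\ref{fact:fact of Fa numbers}~$(e)$ forces the positive $\Falpha$ numbers dividing $F_q$ to be exactly $F_1 = 1$ and $F_q$, whence $\FDF{F_q} = F_q + 1$. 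Applying this with $q = 2p \pm 1$ (which is prime by assumption) and with $q = p$, and combining with the preceding display, I obtain
\[
\FDFk{2}{F_{2p \pm 1}} = \FDF{F_{2p \pm 1} + 1} = F_p + 1 = \FDF{F_p}.
\]

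For the final identity, I would label the primes of the Cunningham chain generated by $p$ by $p_0 = p$ and $p_k = 2p_{k-1} \pm 1 = (p \pm 1)\,2^k \mp 1$ for $1 \leq k \leq l_{\pm 1}(p) - 1$; each $p_k$ is again an odd prime. Applying the preceding paragraph with $p$ replaced by $p_{k-1}$ gives $\FDFk{2}{F_{p_k}} = \FDF{F_{p_{k-1}}}$, so $\ord{F_{p_k}} = \ord{F_{p_{k-1}}} + 1$ for each such $k$, and telescoping produces
\[
\ord{F_{p_{l_{\pm 1}(p) - 1}}} - \ord{F_p} = l_{\pm 1}(p) - 1,
\]
which is the claimed identity.

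The argument is essentially a mechanical chain of applications with no real obstacle beyond bookkeeping: one must check the hypotheses of Theorem~\ref{thm:divisors of F(m+1)+F(m-1)} uniformly in the sign choice, verify the parity conclusion used in evaluating that theorem at $m = p \pm 1$, and confirm that each term of the chain is again an odd prime so that the single-step reduction may be iterated to yield the telescoping sum.
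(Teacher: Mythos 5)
Your proof is correct and takes essentially the same route as the paper: specialize Theorem~\ref{thm:divisors of F(m+1)+F(m-1)} to $m=p\pm1$ (with $F_{m-p}=F_{\pm1}=1$ and $m$ even) and then telescope along the chain. You simply make explicit the steps the paper leaves implicit, namely that $\FDF{F_q}=F_q+1$ for prime $q$ and that $\FDFk{2}{F_{p_k}}=\FDF{F_{p_{k-1}}}$ gives $\ord{F_{p_k}}=\ord{F_{p_{k-1}}}+1$.
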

\begin{proof}
	It follows $m=p\pm1$ in Theorem.\ref{thm:divisors of F(m+1)+F(m-1)}. Further
	\[
	\ord{F_{(p\pm1)2^{l_{\pm1}(p)-1}\mp1}}=1+\ord{F_{(p\pm1)2^{l_{\pm1}(p)-2}\mp1}}=\cdots=l_{\pm1}(p)-1+\ord{F_p}.
	\]
\end{proof}

In Section.$4$, we will show the converse of $(\ref{crl:divisors of F(2p+1)+1-1})$.

\section{The upper bound of $\ord{n}$}
In this section, our aim is to prove Theorem.\ref{thm:converse of cor.3.4} and Theorem.\ref{thm:related with CC}. Those are important results that suggest the relationship between $\fdf$ and Cunningham chains. In the process of the proof, we will use the following theorem which is called the generalized Zeckendorf`s theorem by Hoggatt \cite{Hoggatt} and Keller \cite{Keller}.

\begin{dfn}[Zeckendorf-Hoggatt-Keller]
	Every natural number $n$ has the unique representation:
	\[
	n=\sum_{i=1}^ra_iF_{c_i}
	\]
	where $r$ is a natural number and sequences $\{a_i\}_{i=1}^r, \{c_i\}_{i=1}^r\subset\N$ satisfy the following conditions.
	\begin{enumerate}
		\item[(i)] $\displaystyle 0<a_i\leq\alpha\;\pas{i=1,\ldots,r}, \;a_1<\alpha$,
		\item[(ii)] $\displaystyle 1\leq c_1<\cdots<c_r$,
		\item[(iii)] $\displaystyle c_{i+1}=c_i+1\Longrightarrow a_{i+1}<\alpha \;\pas{i=1,\ldots,r-1}$.
	\end{enumerate}
	Such a sum is called the Zeckendorf representation of $n$.
\end{dfn}

\begin{rem}
	By definition, $\sum_{\substack{d\mid n \\ 0<d\in\Falpha}}d$ is the Zeckendorf representation of $\FDF{n}$.
\end{rem}

\begin{lem}\label{lem:fractional part of Falpha}
	For every $k>i\geq0$,
	\begin{align}
		F_{k+i}&\equiv \pas{-1}^{i+1}F_{k-i} \pmod{F_k}, \label{lem:fractional part of Falpha-1}\\
		F_{2k+i}&\equiv \pas{-1}^kF_i \pmod{F_k}, \label{lem:fractional part of Falpha-2}\\
		F_{3k+i}&\equiv \pas{-1}^{k+i+1}F_{k-i} \pmod{F_k}, \label{lem:fractional part of Falpha-3}\\
		F_{4k+i}&\equiv F_i \pmod{F_k}. \label{lem:fractional part of Falpha-4}
	\end{align}
	More generally, for every two non-negative integers $a,b$ with $a\equiv b\pmod4$,
	\begin{align}\label{lem:fractional part of Falpha-5}
		F_{ak+i}\equiv F_{bk+i} \pmod{F_k}.
	\end{align}
\end{lem}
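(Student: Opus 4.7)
The plan is to reduce everything to a single master congruence plus one sign identity. Applying Fact~\ref{fact:fact of Fa numbers}~$(d)$ with $m = k$ yields $F_{k+n} = F_{k+1}F_n + F_k F_{n-1}$, so
\[
F_{k+n} \equiv F_{k+1} F_n \pmod{F_k}
\]
for every integer $n$. A straightforward induction on $j$ (apply this identity with $n$ replaced by $(j-1)k+i$) then gives
\[
F_{jk+i} \equiv F_{k+1}^{\,j}\, F_i \pmod{F_k} \qquad (j \geq 0,\; i \geq 0).
\]
All five parts reduce to controlling $F_{k+1}^{\,j} \bmod F_k$.

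Next I would establish the Cassini-type identity $F_{k+1} F_{k-1} - F_k^2 = (-1)^k$ by a short induction on $k$: setting $a_k := F_{k+1} F_{k-1} - F_k^2$, the recurrence $F_{k+1} = \alpha F_k + F_{k-1}$ gives $a_k = -a_{k-1}$, while $a_1 = -1$. Since $F_{k+1} \equiv F_{k-1} \pmod{F_k}$ also follows immediately from the recurrence, I obtain
\[
F_{k+1}^{\,2} \equiv F_{k+1} F_{k-1} \equiv (-1)^k \pmod{F_k},
\]
and hence $F_{k+1}^{\,4} \equiv 1 \pmod{F_k}$. The cases $j = 2$ and $j = 4$ of the master congruence then yield \eqref{lem:fractional part of Falpha-2} and \eqref{lem:fractional part of Falpha-4}, and writing $a - b = 4t$ and cancelling $F_{k+1}^{\,4t} \equiv 1$ proves \eqref{lem:fractional part of Falpha-5}.

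For \eqref{lem:fractional part of Falpha-1} and \eqref{lem:fractional part of Falpha-3}, the task is to express $F_{k-i}$ in the same ``$F_{k+1}^{\,?} F_i$'' shape. Applying Fact~\ref{fact:fact of Fa numbers}~$(d)$ with $m = k$, $n = -i$, together with Fact~\ref{fact:fact of Fa numbers}~$(c)$, gives
\[
F_{k-i} = F_{k+1} F_{-i} + F_k F_{-i-1} \equiv (-1)^{i+1} F_{k+1} F_i \pmod{F_k}.
\]
Combined with $F_{k+i} \equiv F_{k+1} F_i \pmod{F_k}$ (the $j=1$ case), this immediately yields \eqref{lem:fractional part of Falpha-1}. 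Combined with $F_{3k+i} \equiv F_{k+1}^{\,3} F_i \equiv (-1)^k F_{k+1} F_i \pmod{F_k}$ (the $j=3$ case together with $F_{k+1}^{\,2} \equiv (-1)^k$), the sign check $(-1)^{k+i+1}(-1)^{i+1} = (-1)^k$ gives \eqref{lem:fractional part of Falpha-3}.

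The only ingredient that is not purely formal manipulation of Fact~\ref{fact:fact of Fa numbers} is the Cassini identity, which is not among the six facts quoted from \cite{Koshy}; the main (though routine) obstacle is therefore to verify it explicitly for arbitrary $\alpha \geq 1$ before invoking it.
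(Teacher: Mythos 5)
Your proof is correct, but it follows a genuinely different route from the paper's. The paper proves \eqref{lem:fractional part of Falpha-1} by induction on $i$ (stepping down with the recurrence), then obtains \eqref{lem:fractional part of Falpha-2}--\eqref{lem:fractional part of Falpha-4} by repeatedly applying Fact~\ref{fact:fact of Fa numbers}~$(d)$ together with the case $i=k-1$ of \eqref{lem:fractional part of Falpha-1}, and finally proves \eqref{lem:fractional part of Falpha-5} by a separate induction on $a$ after reducing to $\delta\in\{0,1,2,3\}$. You instead isolate the single master congruence $F_{jk+i}\equiv F_{k+1}^{\,j}F_i\pmod{F_k}$ and the key relation $F_{k+1}^{\,2}\equiv F_{k+1}F_{k-1}\equiv(-1)^k\pmod{F_k}$, obtained from a Cassini-type identity; this makes the period-four structure (the order of the unit $F_{k+1}$ modulo $F_k$ divides $4$) completely transparent, so \eqref{lem:fractional part of Falpha-2}, \eqref{lem:fractional part of Falpha-4} and \eqref{lem:fractional part of Falpha-5} drop out with no further induction, while \eqref{lem:fractional part of Falpha-1} and \eqref{lem:fractional part of Falpha-3} follow from the negative-index formula $F_{k-i}\equiv(-1)^{i+1}F_{k+1}F_i\pmod{F_k}$ via Facts~$(c)$ and $(d)$, whose statement for all integer indices the paper indeed allows. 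The trade-off is that your argument needs the identity $F_{k+1}F_{k-1}-F_k^2=(-1)^k$, which is not among the six quoted facts; you correctly flag this and your inductive derivation ($a_k=-a_{k-1}$, $a_1=-1$) is valid for every $\alpha\geq1$, whereas the paper's argument stays entirely within its listed facts at the cost of two separate inductions and a less conceptual treatment of \eqref{lem:fractional part of Falpha-5}.
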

\begin{proof}
	Fix a natural number $k$. First, we prove $(\ref{lem:fractional part of Falpha-1})$. From Fact.\ref{fact:fact of Fa numbers} $(d)$,
	\[
	F_{k+i}=F_kF_{i+1}+F_{k-1}F_i\equiv F_{k-1}F_i \pmod{F_k}.
	\]
	Thus $(\ref{lem:fractional part of Falpha-1})$ holds for $i=0,1$. Suppose that $(\ref{lem:fractional part of Falpha-1})$ holds for all natural numbers less than $i\geq2$. The right hand side is
	\begin{align*}
		\pas{-1}^{i+1}F_{k-i}=\pas{-1}^{i+1}\pas{F_{k-i+2}-\alpha F_{k-i+1}}=\pas{-1}^{i-1}F_{k-i+2}+\alpha \pas{-1}^iF_{k-i+1}.
	\end{align*}
	Using the assumptions of induction, we have
	\[
	\pas{-1}^{i-1}F_{k-i+2}+\alpha \pas{-1}^iF_{k-i+1}\equiv F_{k+i-2}+\alpha F_{k+i-1}=F_{k+i} \pmod{F_k}.
	\]
	The proof of $(\ref{lem:fractional part of Falpha-2})$ runs as
	\begin{align*}
		F_{2k+i}=F_{2k}F_{i+1}+F_{2k-1}F_i\equiv F_{2k-1}F_i\equiv \pas{-1}^kF_i \pmod{F_k}
	\end{align*}
	from the case of $i=k-1$ of $(\ref{lem:fractional part of Falpha-1})$. Similarly, $(\ref{lem:fractional part of Falpha-3})$ and $(\ref{lem:fractional part of Falpha-4})$ is obtained from
	\begin{align*}
		F_{3k+i}
		&=F_{3k}F_{i+1}+F_{3k-1}F_i\equiv F_{2k+(k-1)}F_i\equiv \pas{-1}^kF_{k-1}F_i\\
		&\equiv\pas{-1}^k\pas{F_kF_{i+1}+F_{k-1}F_i}=\pas{-1}^kF_{k+i}\equiv \pas{-1}^{k+i+1}F_{k-i} \pmod{F_k},\\
		F_{4k+i}
		&\equiv F_{4k-1}F_i\equiv\pas{-1}^{k+(k-1)+1}F_1F_i=F_i \pmod{F_k}.
	\end{align*}
	Next, we observe that $(\ref{lem:fractional part of Falpha-5})$ holds for every non-negative $a,b$ with $a\equiv b\pmod4$ if and only if for every non-negative $a$,
	\begin{align}\label{lem:fractional part of Falpha-6}
		F_{ak+i}\equiv F_{\delta k+i} \pmod{F_k}
	\end{align}
	holds where $a\equiv\delta\pmod4$ with $\delta\in\Pas{0,1,2,3}$. We prove $(\ref{lem:fractional part of Falpha-6})$ by using induction with respect to $a$. The case $a=0,1,2,$ and $3$ are trivial since $a=\delta$. And $(\ref{lem:fractional part of Falpha-4})$ is nothing but the case of $a=4$. Suppose that $(\ref{lem:fractional part of Falpha-6})$ holds also all natural numbers less than $a\geq5$, and we take $\delta^\prime\in\{1,2,3,4\}$ satisfying $\delta^\prime\equiv a\pmod4$. Then
	\begin{align*}
		F_{ak+i}=F_{ak}F_{i+1}+F_{ak-1}F_i\equiv F_{ak-1}F_i=F_{(a-1)k+(k-1)}F_i.
	\end{align*}
	Using the assumption of induction, we have
	\begin{align*}
		\equiv F_{(\delta^\prime-1)k+(k-1)}F_i=F_{\delta^\prime-1}F_i\equiv F_{\delta^\prime k}F_{i+1}+F_{\delta^\prime k-1}F_i=F_{\delta^\prime k+i} \pmod{F_k}.
	\end{align*}
\end{proof}

\begin{thm}\label{thm:best estimation of FDF(n)}
	Suppose $a\in\FDF{\N}$, and put $k=\ind{a}$. If $F_i\mid a$, then $i\leq(k+1)/2$, that is
	\[
	\ind{\FDF{a}}\leq\frac{k+1}{2}.
	\]
\end{thm}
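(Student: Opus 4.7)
The plan is to argue by contradiction: assume $F_i\mid a$ with $i>(k+1)/2$, equivalently $k\leq 2i-2$, and derive a contradiction by reducing $a$ modulo $F_i$. Since $a=\FDF{n}$ for some $n$, we have $a=\sum_{j\in S}F_j$ with $S=\{j\geq 1:F_j\mid n\}$; because all coefficients equal $1<\alpha$, this is already the Zeckendorf representation of $a$, so $k=\max S$, and $1\in S$ since $F_1=1$ divides every $n$. Note also that we may assume $i<k$: the case $i=k$ would give $F_k\mid a$, contradicting Lemma~\ref{lem:there exists a k s.t. FDFk(n)=1}, while $i>k$ gives $F_i>a$; hence $i+1\leq k\leq 2i-2$, and in particular $i\geq 3$.

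Next I would reduce each summand modulo $F_i$ via Lemma~\ref{lem:fractional part of Falpha}: terms with $j<i$ remain as $F_j$, the term $F_i$ (if present) becomes $0$, and for $j=i+r$ with $1\leq r\leq k-i\leq i-2$, formula~(\ref{lem:fractional part of Falpha-1}) gives $F_{i+r}\equiv(-1)^{r+1}F_{i-r}\pmod{F_i}$. (The bound $r<i$ needed to apply that lemma is exactly what $k\leq 2i-2$ secures.) Reindexing the folded sum by $j'=i-r$ and combining yields
\begin{align*}
a\equiv R:=\sum_{j=1}^{i-1}c_jF_j\pmod{F_i},\qquad c_j=[j\in S]+(-1)^{i-j+1}[2i-j\in S]\in\{-1,0,1,2\}.
\end{align*}
The critical observation is $c_1=1$: the inequality $1<2i-k$ (equivalent to $k<2i-1$) holds by assumption, so $2i-1\notin S$, killing the folded contribution at $j=1$ and leaving only $F_1=1\in S$.

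It remains to turn $F_i\mid R$ into a contradiction. By Fact~\ref{fact:fact of Fa numbers}(b) and $F_{i-1}\leq F_i/\alpha$,
\begin{align*}
|R|\leq 2\sum_{j=1}^{i-1}F_j=\frac{2(F_i+F_{i-1}-1)}{\alpha}<\frac{2(\alpha+1)F_i}{\alpha^2}\leq\frac{8F_i}{9}<F_i
\end{align*}
for every $\alpha\geq 3$. Combined with $F_i\mid R$, this forces $R=0$. But $R=0$ is incompatible with $c_1=1$: if $n_0:=\max\{j:c_j\neq 0\}\geq 2$, then $c_{n_0}F_{n_0}=-\sum_{j<n_0}c_jF_j$ together with the same bound yields $F_{n_0}\leq|c_{n_0}|F_{n_0}\leq 2\sum_{j<n_0}F_j<F_{n_0}$, absurd; hence $n_0=1$ and $R=c_1=1\neq 0$, the desired contradiction.

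The main obstacle I anticipate is the coefficient bookkeeping—tracking which summand folds onto which, pinning down the sign $(-1)^{i-j+1}$, and verifying $c_1=1$ precisely in the regime $k\leq 2i-2$. Once this is set up, the Fibonacci-like super-geometric growth (quantitatively, $2(\alpha+1)/\alpha^2<1$ for $\alpha\geq 3$) does all the remaining work: it both bounds $|R|<F_i$ and rules out any nontrivial $\{-1,0,1,2\}$-combination of $F_1,\ldots,F_{i-1}$ from vanishing.
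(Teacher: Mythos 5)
Your proof is correct and takes essentially the same route as the paper's: you write $a$ as a sum of distinct $\Falpha$ numbers containing $F_1$, fold the indices above $i$ modulo $F_i$ via Lemma \ref{lem:fractional part of Falpha} (\ref{lem:fractional part of Falpha-1}), use the bound $2\sum_{j<i}F_j<F_i$ (valid for $\alpha\geq3$) to force the residue to vanish exactly, and exploit that the $F_1$-term cannot be cancelled because $2i-1>k$ (the paper's $c_1\leq i_0-2$). The only difference is the final contradiction, where the paper splits into parity cases and invokes uniqueness of the Zeckendorf representation, while you rule out a vanishing nonzero $\{-1,0,1,2\}$-coefficient combination directly by the same growth estimate applied at its largest index; this is a slightly more self-contained finish but the same underlying mechanism.
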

\begin{proof}
	Let $i_0$ be the maximal $i$ satisfying $F_i\mid a$. The case $k=1$ is $i_0=1$ since $a=1$, and hence the claim holds. Assume that $i_0>(k+1)/2$ for $k\geq2$. In particular, $i_0\geq2$. Then $a>1$. Thus, there exist natural numbers $c_1\ldots,c_n$ such that
	\begin{align*}
		a=F_{i_0+c_1}+F_{i_0+c_2}+\cdots+F_{i_0+c_m}+F_{c_{m+1}}+\cdots+F_{c_n},\\
		k=i_0+c_1>i_0+c_2>\cdots>i_0+c_m>c_{m+1}>\cdots>c_n=1.
	\end{align*}
	Note that $c_1\leq i_0-2$. Since $F_{i_0}\mid a$,
	\begin{align*}
		a=\sum_{j=1}^mF_{i_0+c_j}+\sum_{j=m+1}^nF_{c_j}\equiv 0 \pmod{F_k}
	\end{align*}
	From Lemma.\ref{lem:fractional part of Falpha} $(\ref{lem:fractional part of Falpha-1})$,
	\begin{align*}
		\sum_{j=1}^mF_{i_0+c_j}\equiv \sum_{j=1}^m\pas{-1}^{c_j+1}F_{i_0-c_j} \pmod{F_k}
	\end{align*}
	It is enough to consider only the fractional part, and hence we suppose $c_{m+1}<i_0$ without loss of generality. Since $\alpha\geq3$ and $i_0\geq2$, we estimate
	\[
	\abs{\sum_{j=1}^m\pas{-1}^{c_j+1}F_{i_0-c_j}+\sum_{j=m+1}^nF_{c_j}}\leq2\sum_{j=1}^{i_0-1}F_j<F_{i_0}.
	\]
	Therefore, we have
	\[
	\sum_{j=m+1}^nF_{c_j}=\sum_{j=1}^m\pas{-1}^{c_j}F_{i_0-c_j}.
	\]
	Assume that there exist some $j\in\Pas{1,\ldots,m}$ such that $c_j$ is odd, and denote all of them by $c_{d_1},\ldots,c_{d_h}$. Then
	\[
	\sum_{j=m+1}^nF_{c_j}+\sum_{i=1}^hF_{i_0-c_{d_i}}=\sum_{\substack{j=1\\j\neq c_{d_i}(i=1,\ldots,h)}}^mF_{i_0-c_j}.
	\]
	In both sides, each coefficient of $\Falpha$ numbers are $\leq2$. Thus, they are Zeckendorf representations of a natural number. However, the left hand side has $F_{i_0-c_{d_h}}$ and the right-hand side does not. This is a contradiction to the uniqueness of the Zeckendorf representation. Therefore $c_j$ is even for all $j=1,\ldots,m$, and
	\[
	\sum_{j=m+1}^nF_{c_j}=\sum_{j=1}^mF_{i_0-c_j}
	\]
	holds. This both sides are Zeckendorf representations. But the left hand side has $F_1$ and the right-hand side does not have it since $c_1\leq i_0-2$. This is also a contradiction to the uniqueness, and hence we obtain that $i_0\leq(k+1)/2$.
\end{proof}

\begin{rem}\label{rem:best estimation of FDF(n)}
	The estimate given by Theorem.\ref{thm:best estimation of FDF(n)} is best-possible. Let $a=F_{2p-1}+1$ with $p,2p-1\in\P$. For example, $p=3$. From Theorem.\ref{thm:best estimation of FDF(n)}, $i\leq p$ if $F_i\mid a$. On the other hand, we get $F_p\mid a$ if $p$ is odd from Corollary.\ref{crl:divisors of F(2p+1)+1} $(\ref{crl:divisors of F(2p+1)+1-1})$.
\end{rem}

We can obtain that the converse of Corollary.\ref{crl:divisors of F(2p+1)+1} from this theorem.

\begin{thm}\label{thm:converse of cor.3.4}
	For two odd prime numbers $p,q$, the following are equivalent:
	\begin{enumerate}
		\item[\textup{(i)}] $\displaystyle p=2q+1\text{ or }2q-1$,
		\item[\textup{(ii)}] $\displaystyle \FDFk{2}{F_p}=\FDF{F_q} \text{ for some }\alpha\geq3$,
		\item[\textup{(iii)}] $\displaystyle \FDFk{2}{F_p}=\FDF{F_q} \text{ for all }\alpha\geq3$.
	\end{enumerate}
\end{thm}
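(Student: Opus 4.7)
The plan is to close the cycle $\mathrm{(i)} \Rightarrow \mathrm{(iii)} \Rightarrow \mathrm{(ii)} \Rightarrow \mathrm{(i)}$. The first arrow is exactly Corollary~\ref{crl:divisors of F(2p+1)+1} (applied with $q$ playing the role of ``$p$'' there, and $p = 2q\pm 1$ playing the role of ``$2p\pm 1$''), and $\mathrm{(iii)} \Rightarrow \mathrm{(ii)}$ is immediate, so all the substance lies in $\mathrm{(ii)} \Rightarrow \mathrm{(i)}$.

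Fix $\alpha \geq 3$ with $\FDFk{2}{F_p} = \FDF{F_q}$. Since $p$ and $q$ are prime, Fact~\ref{fact:fact of Fa numbers}$(e)$ makes the $\Falpha$-divisors of $F_p$ (respectively $F_q$) equal to $\{F_1, F_p\}$ (respectively $\{F_1, F_q\}$), reducing the hypothesis to $\FDF{F_p + 1} = F_q + 1$. Now the sum
\[
\FDF{F_p + 1} = \sum_{F_d \mid F_p + 1} F_d
\]
is itself a Zeckendorf representation (each coefficient equals $1 < \alpha$, so condition (iii) of the definition holds trivially), while $F_q + F_1$ is the Zeckendorf representation of $F_q + 1$. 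Uniqueness forces the set of indices of $\Falpha$-divisors of $F_p + 1$ to be exactly $\{1, q\}$; in particular, $F_q \mid F_p + 1$ and $q$ is the \emph{maximal} such index.

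It remains to extract $p = 2q \pm 1$ from this divisibility plus maximality. Writing $p = aq + i$ with $0 \leq i < q$, I would use Lemma~\ref{lem:fractional part of Falpha} (reducing $a$ modulo $4$ via \eqref{lem:fractional part of Falpha-5}) to compute $F_p \pmod{F_q}$ as $\pm F_i$ or $\pm F_{q-i}$, the sign depending on the parities of $i$ and $q$. Using that $q$ is odd and that, for $\alpha \geq 3$, one has $|F_j| < F_q - 1$ for $0 \leq j < q$ together with $F_j = 1 \iff j = 1$, the congruence $F_p \equiv -1 \pmod{F_q}$ isolates exactly two families: $a \equiv 1 \pmod 4$ with $i = q-1$ (giving $p = (a+1)q - 1$), or $a \equiv 2 \pmod 4$ with $i = 1$ (giving $p = aq + 1$). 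Both can be rewritten as $p = 2sq \mp 1$ for some odd $s \geq 1$. To rule out $s \geq 3$, set $k = sq$, which is odd and strictly bigger than $q$, and rewrite $p = 2k \mp 1$ as $1\cdot k + (k-1)$ or $2\cdot k + 1$; rerunning the same Lemma~\ref{lem:fractional part of Falpha} calculation modulo $F_k$ yields $F_k \mid F_p + 1$, contradicting the maximality of $q$. Hence $s = 1$, i.e., $p = 2q \pm 1$. The main obstacle will be the sign and parity bookkeeping in the Lemma~\ref{lem:fractional part of Falpha} case analysis, particularly ruling out the residue classes $a \equiv 0, 3 \pmod 4$ (which would force impossible equations like $F_j = F_q - 1$ or $F_j = -1$) and then symmetrically reapplying the same machinery with $q$ replaced by $k$ to manufacture the contradiction.
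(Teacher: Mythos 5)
Your cycle (i)$\Rightarrow$(iii)$\Rightarrow$(ii)$\Rightarrow$(i) is sound, and your argument for the key implication (ii)$\Rightarrow$(i) is correct but genuinely different from the paper's. The paper sets $q^\prime=(p-1)/2$, uses the addition identity $(\ref{eq:summation formula-1})$ to factor $F_p+1$ and obtain the lower bound $\ind{\FDFk{2}{F_p}}\geq q^\prime+\frac{1+(-1)^{q^\prime}}{2}$, and then invokes Theorem.\ref{thm:best estimation of FDF(n)} (namely $\ind{\FDF{a}}\leq(\ind{a}+1)/2$ for $a\in\FDF{\N}$) for the matching upper bound $q\leq q^\prime+1$; the sandwich $q^\prime\leq q\leq q^\prime+1$ plus parity of $q$ finishes. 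You instead bypass Theorem.\ref{thm:best estimation of FDF(n)} entirely: after the correct reduction to $\FDF{F_p+1}=F_q+F_1$, you use the observation of Remark (that a divisor sum over distinct $\Falpha$ numbers is itself a Zeckendorf representation, valid since $\alpha\geq3$ makes all positive $\Falpha$ numbers distinct and all coefficients equal $1<\alpha$) and uniqueness to conclude the $\Falpha$-divisor set of $F_p+1$ is exactly $\Pas{1,F_q}$, and then classify $F_p\equiv-1\pmod{F_q}$ directly via Lemma.\ref{lem:fractional part of Falpha}. I checked your case analysis: with $p=aq+i$, $0\leq i<q$, $q$ odd, the classes $a\equiv0,3\pmod4$ die because they force $F_{q-i}\equiv-1$ or $F_i\equiv-1\pmod{F_q}$, impossible since $F_{q-1}+1<F_q$ (here $\alpha\geq3$, indeed $\alpha\geq2$, suffices), while $a\equiv1$ with $i=q-1$ and $a\equiv2$ with $i=1$ survive, giving $p=2sq\mp1$ with $s$ odd; and for $s\geq3$ your re-application of Lemma.\ref{lem:fractional part of Falpha} with $k=sq$ odd indeed gives $F_k\mid F_p+1$, contradicting that the divisor set is $\Pas{1,F_q}$. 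What your route buys is independence from Theorem.\ref{thm:best estimation of FDF(n)} and a stronger by-product (an exact classification of when $F_q\mid F_p+1$); what it costs is the four-way sign and parity bookkeeping, which the paper's inequality sandwich avoids by reusing machinery it needs anyway for Theorem.\ref{thm:estimatation of ord(n) with ind(n)}. Two small points to tidy when writing it up: note explicitly that $p\neq q$ (else $F_q\mid F_q+1$ is absurd), so the case $a=0$ needs no separate treatment beyond your $a\equiv0$ analysis, and justify $F_q-F_{q-1}>1$ in the excluded cases.
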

\begin{proof}
	From Corollary.\ref{crl:divisors of F(2p+1)+1}, $(\text{i})$ implies $(\text{iii})$. Also it is clear that $(\text{ii})$ follows from $(\text{iii})$. Suppose that $(\text{ii})$ is true, and let $p=2q^\prime+1$. Since
	\begin{align*}
		F_p+1=F_{(q^\prime+1)+q^\prime}+F_{(q^\prime+1)-q^\prime}=
		\begin{cases}
			F_{q^\prime}\pas{F_{q^\prime+2}+F_{q^\prime}}	&	\pas{q^\prime\text{ is odd}},\\
			F_{q^\prime+1}\pas{F_{q^\prime+1}+F_{q^\prime-1}}	&	\pas{q^\prime\text{ is even}}
		\end{cases}
	\end{align*}
	from $(\ref{eq:summation formula-1})$,
	\begin{align*}
		\FDFk{2}{F_p}=\FDF{F_p+1}\geq
		\begin{cases}
			\FDF{F_{q^\prime}}	&	\pas{q^\prime \text{ is odd}},\\
			\FDF{F_{q^\prime+1}}	&	\pas{q^\prime \text{ is even}}.
		\end{cases}
	\end{align*}
	Thus
	\[
	q=\ind{F_q+1}=\ind{\FDF{F_q}}=\ind{\FDFk{2}{F_p}}\geq q^\prime+\frac{1+\pas{-1}^{q^\prime}}{2}.
	\]
	On the other hand,
	\[
	\ind{\FDFk{2}{F_p}}=\ind{\FDF{F_p+1}}\leq\frac{p+1}{2}=q^\prime+1
	\]
	from Theorem.\ref{thm:best estimation of FDF(n)}. If $q^\prime$ is even, then we have $q=q^\prime+1$, and hence $p=2q-1$. If $q^\prime$ is odd, then we get $q^\prime=q$ since $q^\prime\leq q\leq q^\prime+1$ and $q$ is odd. Consequently, we obtain that $p=2q+1$.
\end{proof}

\begin{thm}\label{thm:estimatation of ord(n) with ind(n)}
	For every natural number $n$,
	\[
	\ord{n}\leq \frac{1}{\log2}\log\pas{\ind{n}}+2.
	\]
\end{thm}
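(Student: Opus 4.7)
The plan is to iterate the halving bound provided by Theorem~\ref{thm:best estimation of FDF(n)}. Write $a_k:=\ind{\FDFk{k}{n}}$ for $k\geq0$. First I dispose of the trivial case $\ind{n}=1$: then $1\leq n<\alpha=F_2$, so the only $\Falpha$-divisor of $n$ is $F_1=1$, giving $\FDF{n}=1$ and $\ord{n}\leq1<\log(1)/\log 2+2$.

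Now assume $\ind{n}\geq2$. Lemma~\ref{lem:there exists a k s.t. FDFk(n)=1} provides the opening step $a_1\leq a_0=\ind{n}$. For each $k\geq1$, the value $\FDFk{k}{n}=\FDF{\FDFk{k-1}{n}}$ lies in $\FDF{\N}$, so Theorem~\ref{thm:best estimation of FDF(n)} applies and yields $a_{k+1}\leq(a_k+1)/2$. Rewriting this as $a_{k+1}-1\leq(a_k-1)/2$ and iterating gives the geometric decay
\[
a_k-1\leq\frac{a_1-1}{2^{k-1}}\leq\frac{\ind{n}-1}{2^{k-1}}\qquad(k\geq1).
\]

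Choose $k^*:=\floor{\log(\ind{n})/\log 2}+2$. Then $2^{k^*-1}\geq\ind{n}>\ind{n}-1$, so the displayed inequality gives $a_{k^*}-1<1$; since $a_{k^*}$ is a positive integer, $a_{k^*}=1$. Because $k^*\geq1$, we have $\FDFk{k^*}{n}\in\FDF{\N}$, and any $m\in\FDF{\N}$ with $\ind{m}=1$ (that is, $m<\alpha$) must equal $1$: a Fibonacci divisor of its preimage other than $F_1$ would already contribute at least $F_2=\alpha$ to the defining sum. Hence $\FDFk{k^*}{n}=1$, and $\ord{n}\leq k^*\leq\log(\ind{n})/\log 2+2$. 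The substantive work has already been absorbed into Theorem~\ref{thm:best estimation of FDF(n)}, so no real obstacle remains; the only delicate point is that one cannot invoke the halving at $k=0$, since $n$ need not lie in $\FDF{\N}$, which is why the weaker Lemma~\ref{lem:there exists a k s.t. FDFk(n)=1} is used to launch the induction.
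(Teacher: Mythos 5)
Your proof is correct and follows essentially the same route as the paper: both iterate the halving bound of Theorem~\ref{thm:best estimation of FDF(n)} along the sequence $\FDFk{k}{n}$, use Lemma~\ref{lem:there exists a k s.t. FDFk(n)=1} to launch the iteration (since $n$ itself need not lie in $\FDF{\N}$), and finish by observing that an element of $\FDF{\N}$ of small index must equal $1$. The only difference is bookkeeping: the paper stops at index $\leq 2$ and identifies such an element as $F_2+F_1$, while your recursion $a_{k+1}-1\leq(a_k-1)/2$ drives the index directly to $1$.
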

\begin{proof}
	Let $a$ be an arbitrary element in $\FDF{\N}$, and take a natural number $k$ such that $2^k\leq\ind{a}<2^{k+1}$. From Theorem.\ref{thm:best estimation of FDF(n)}, we estimate that
	\[
	\ind{\FDFk{k}{a}}\leq \frac{1}{2}\ind{\FDFk{k-1}{a}}+\frac{1}{2}\leq\cdots\leq\pas{\frac{1}{2}}^k\ind{a}+\sum_{m=1}^{k-1}\frac{1}{2^m}<3.
	\]
	That is $\ind{\FDFk{k}{a}}\leq2$. For $b\in\FDF{\N}$, if $\ind{b}=2$, then $b$ is $F_2+F_1$. Since $\alpha$ does not divide them, we get $\FDF{b}=1$. Therefore $\ind{\FDFk{k+1}{a}}=1$. Since $k\leq \log(\ind{a})/\log2$, we find that
	\[
	\ord{a}\leq\flog{2}\log\pas{\ind{a}}+1.
	\]
	From this, for every $n$,
	\[
	\ord{n}=\ord{\FDF{n}}+1\leq\flog{2}\log\pas{\ind{\FDF{n}}}+2\leq \flog{2}\log\pas{\ind{n}}+2.
	\]
\end{proof}

Let us remember that $\ind{n}$ is the maximal index of $\Falpha$ numbers $\leq n$. Since $\Falpha$ is defined by the linear recurrence sequence whose coefficients are $\geq1$, it diverges exponentically. Therefore, we expect that $\ord{n}\ll\log\log n$. In fact, we obtain the following more explicit inequalities.

\begin{thm}\label{thm:estimation of ord(n)}
	Let $\alpha\geq3$ be an integer. Then we have
	\begin{align*}
		\ord{n}
		\begin{cases}
			=0                       & (n=1),\\
			<\frac{1}{\log2}\log\log n+3 & (n\geq2).
		\end{cases}
	\end{align*}
	In particular, if $\alpha\geq55$ and
	\[
	n>\exp\pas{\frac{4}{\log\valpha-4}\log\pas{\sqrt{\alpha^2+4}+\frac{1}{\valpha}}},
	\]
	then we get
	\begin{align}\label{thm:estimation of ord(n)-1}
		\ord{n}<\frac{1}{\log 2}\log\log n.
	\end{align}
	Moreover,
	\begin{align*}
		\ord{n}
		\begin{cases}
			=0 & (n=1),\\
			=1 & (2\leq n\leq7),\\
			<\frac{1}{\log2}\log\log n & (n\geq8)
		\end{cases}
	\end{align*}
	holds at least in the case $\alpha\geq2981$.
\end{thm}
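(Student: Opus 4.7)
The plan is to combine Theorem~\ref{thm:estimatation of ord(n) with ind(n)}, which already provides $\ord{n}\leq\log(\ind{n})/\log 2 + 2$, with an explicit upper bound on $\ind{n}$ obtained directly from Fact~\ref{fact:fact of Fa numbers}~$(a)$. Writing $F_{\ind{n}}\leq n$ and isolating $\valpha^{\ind{n}}$ in the closed form $F_k=(\valpha^k-(-\valpha)^{-k})/\sqrt{\alpha^2+4}$ yields
\[
\ind{n}\leq\frac{\log n+\log\pas{\sqrt{\alpha^2+4}+1/\valpha}}{\log\valpha}
\]
whenever $\ind{n}\geq 1$. Every clause of the theorem then follows by substituting this estimate into Theorem~\ref{thm:estimatation of ord(n) with ind(n)} and rearranging.

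For the universal bound $\ord{n}<\log\log n/\log 2 + 3$, I need $\ind{n}<2\log n$. Plugging in the above index estimate, this reduces to $\log n\,(2\log\valpha-1)>\log(\sqrt{\alpha^2+4}+1/\valpha)$, which holds for every $\alpha\geq 3$ once $\log n$ exceeds a small constant depending on $\alpha$. The finitely many remaining small $n$ are handled by noting that $F_3=\alpha^2+1\geq 10$, so $\ind{n}\leq 2$ for $2\leq n\leq 9$; Theorem~\ref{thm:estimatation of ord(n) with ind(n)} then gives $\ord{n}\leq 3$, and a direct check at $n=2$ (where $\ind{2}=1$ because $F_2=\alpha\geq 3>2$, hence $\ord{2}=1$) settles the one borderline case.

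For the sharper bound $\ord{n}<\log\log n/\log 2$, the same substitution demands $\ind{n}<\log n/4$, i.e.~$\log n\,(\log\valpha-4)>4\log(\sqrt{\alpha^2+4}+1/\valpha)$. This forces $\log\valpha>4$, i.e.~$\valpha>e^4\approx 54.6$; since $\valpha>\alpha$, the smallest admissible integer is $\alpha=55$ (at which $\log\valpha>4$ holds by a quick numerical check), and solving the inequality for $n$ reproduces the stated threshold.

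For the final assertion with $\alpha\geq 2981$, the key observation is that $F_2=\alpha>7$, so every $n$ with $2\leq n<\alpha$ has $1$ as its only $\Falpha$ divisor, whence $\FDF{n}=1$ and $\ord{n}=1$. This immediately proves $\ord{n}=1$ for $2\leq n\leq 7$, and also yields $\ord{n}=1<\log\log n/\log 2$ for $8\leq n<\alpha$ since $\log\log 8/\log 2>1$. For $n\geq\alpha$ at $\alpha=2981$, the threshold from the previous paragraph evaluates to just above $e^8\approx 2981$, and the boundary value $n=\alpha=2981$ is verified by hand ($\ind{\alpha}=2$, $\FDF{\alpha}=1+\alpha$, and $\alpha\nmid 1+\alpha$ for $\alpha\geq 2$, so $\ord{\alpha}=2<3<\log\log 2981/\log 2$). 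The main obstacle is precisely this numerical bookkeeping at the boundaries and the choice of the constant $2981$; the rest is a short algebraic manipulation of the two input estimates.
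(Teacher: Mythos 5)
Your route is essentially the paper's own: combine Theorem~\ref{thm:estimatation of ord(n) with ind(n)} with the index bound $\ind{n}\leq\pas{\log n+\log\pas{\sqrt{\alpha^2+4}+1/\valpha}}/\log\valpha$ extracted from Fact~\ref{fact:fact of Fa numbers}~$(a)$, and your reductions $\ind{n}<2\log n$ (for the $+3$ bound) and $\ind{n}<\log n/4$ (for the sharp bound) reproduce exactly the paper's conditions, including the threshold $\exp\pas{\frac{4}{\log\valpha-4}\log\pas{\sqrt{\alpha^2+4}+\frac{1}{\valpha}}}$ and the constraint $\log\valpha>4$, i.e.\ $\alpha\geq55$. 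The one stylistic difference is in how uniformity over $\alpha$ is obtained for the first clause: the paper proves that $A_\alpha(y)=\exp\pas{\frac{1}{y\log\valpha-1}\log\pas{\sqrt{\alpha^2+4}+\frac1\valpha}}$ is decreasing in $\alpha$ (Remark~\ref{rem:estimation of ord(n)}) and uses $A_\alpha(2)\leq A_3(2)<3$, whereas you fall back on $F_3=\alpha^2+1\geq10$ for $2\leq n\leq9$ plus a direct check at $n=2$. That works, but you assert only that the needed inequality $\log n\,\pas{2\log\valpha-1}>\log\pas{\sqrt{\alpha^2+4}+1/\valpha}$ holds once $\log n$ exceeds ``a small constant depending on $\alpha$''; since your fallback only covers $n\leq9$, you must check that this constant is $\leq\log 10$ uniformly in $\alpha$ (easy: $\sqrt{\alpha^2+4}+1/\valpha<2\valpha$, so at $n\geq10$ the inequality reduces to $3.6\log\valpha>3$), but the step should be written out.

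The genuine gap is in the last clause. The statement claims the bound $\ord{n}<\flog2\log\log n$ for $n\geq8$ for \emph{every} $\alpha\geq2981$, but your argument only treats $\alpha=2981$: you evaluate the threshold numerically at that single value and hand-check $n=2981$. For $\alpha>2981$ the threshold $A_\alpha(1/4)$ changes with $\alpha$, and nothing you wrote shows that $A_\alpha(1/4)<\alpha$ continues to hold, which is exactly what is needed to cover the range $n\geq\alpha$ (the range $8\leq n<\alpha$ is fine via $\ord{n}=1$). The paper closes this by the monotonicity of $A_\alpha(y)$ in $\alpha$ proved in Remark~\ref{rem:estimation of ord(n)}: once $\alpha=2981>A_{2981}(1/4)$, every larger $\alpha$ satisfies $\alpha>2981>A_{2981}(1/4)\geq A_\alpha(1/4)$. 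You would need either that monotonicity (a nontrivial derivative computation) or a direct estimate valid for all $\alpha\geq2981$, e.g.\ $\log\pas{\sqrt{\alpha^2+4}+1/\valpha}\leq\log\alpha+3/\alpha^2$ and $\log\valpha>\log\alpha$, which reduces $A_\alpha(1/4)<\alpha$ to $\log^2\alpha-8\log\alpha-12/\alpha^2>0$, true for $\alpha\geq2981$. A minor related slip: at $\alpha=2981$ the threshold is in fact just \emph{below} $e^8<2981$ (about $2980.9$), not ``just above'' it; that is precisely why $n\geq\alpha$ clears it, and your separate verification of $n=2981$, while harmless, does not substitute for the missing argument at larger $\alpha$.
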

\begin{proof}
	We see that $\ord{1}=0$ by definition. Hereafter let $n\geq2$ and $k=\ind{n}\geq1$. Since $F_k\leq n$, we have
	\[
	k\leq\flog{\valpha}\log\pas{n\sqrt{\alpha^2+4}+\pas{-\valpha}^{-k}}
	\]
	by the argument in the proof of Theorem.\ref{thm:there exists a k s.t. FDFk(n)=1}. From Theorem.\ref{thm:estimatation of ord(n) with ind(n)}, we estimate that
	\begin{align}\label{thm:estimation of ord(n)-2}
		\ord{n}
		&\leq\flog{2}\pas{\log\log\pas{n\sqrt{\alpha^2+4}+\pas{-\valpha}^{-k}}-\log\log\valpha}+2\notag\\
		&<\flog{2}\log\log n+\flog{2}\pas{\log\pas{1+\flog{n}\log\pas{\sqrt{\alpha^2+4}+\frac{1}{\valpha}}}-\log\log\valpha}+2.
	\end{align}
	Here, we define
	\begin{align*}
		f_\alpha(x)&:=\log\pas{1+\flog{x}\log\pas{\sqrt{\alpha^2+4}+\frac{1}{\valpha}}}-\log\log\valpha,\\
		g_\alpha(x)&:=\exp\pas{f_\alpha(x)}=\flog{\valpha}\pas{1+\flog{x}\log\pas{\sqrt{\alpha^2+4}+\frac{1}{\valpha}}}
	\end{align*}
	with $x\geq2$. For real $y>(\log\valpha)^{-1}$, we have
	\[
	x>\exp\pas{\frac{1}{y\log\valpha-1}\log\pas{\sqrt{\alpha^2+4}+\frac{1}{\valpha}}}.
	\]
	Denote by $A_\alpha(y)$ the right-hand side of this inequality. Then $A_\alpha(y)$ is decreasing with respect to $\alpha$. (We will prove this in Remark.\ref{rem:estimation of ord(n)}.) Since $\varphi_3=(3+\sqrt{13})/2>3.3$, we have
	\[
	\log A_\alpha(2)\leq\log A_3(2)=\frac{1}{2\log\varphi_3-1}\log\pas{\sqrt{13}+\varphi_3^{-1}}<1
	\]
	with $\alpha\geq3$, that is $A_\alpha(2)<3$. Therefore, we have $g_\alpha(x)<2$ for $\alpha\geq3$ and $x\geq3$. Thus for every $\alpha\geq3$,
	\[
	\ord{n}<\flog{2}\log\log n+\flog{2}\log g_\alpha(n)+2<\flog{2}\log\log n+3
	\]
	with $n\geq3$. In addition, this also holds the case $n=2$ since $\ord{2}=1$ and $\log\log2/\log2+3\simeq2.47$.

	Let us next prove $(\ref{thm:estimation of ord(n)-1})$. It is enough to consider the domain of $\alpha\geq3$ and $a\geq2$ which satisfies $f_\alpha(x)/\log2+2<0$. Since this condition can be transformed into $g_\alpha(s)<1/4$ it is enough to assume the condition $x>A_\alpha(1/4)$ for all sufficient large $\alpha$. Then $\log\valpha>4$, that is $\alpha\geq55$. Thus for every $\alpha\geq55$, we have
	\[
	\ord{n}<\flog{2}\log\log n
	\]
	with $x>A_\alpha(1/4)$. Since $A_\alpha(y)$ is decreasing in $\alpha$, $\alpha>A_\alpha(1/4)$ holds for all sufficiently large $\alpha\geq55$. The lower bound is $\alpha\geq2981$ from computer calculations, and hence for every $\alpha\geq2981$ we get
	\[
	\ord{n}<\flog{2}\log\log n
	\]
	with $n\geq\alpha$. $\ord{n}=1$ if $2\leq n<\alpha$, and $\log\log n/\log2>1$ for $n\geq8$. This implies
	\begin{align*}
		\ord{n}
		\begin{cases}
			=1  & \pas{2\leq n\leq 7},\\
			<\flog{2}\log\log n  & \pas{n\geq8}.
		\end{cases}
	\end{align*}
\end{proof}

\begin{rem}\label{rem:estimation of ord(n)}
	We show that $A_\alpha(y)$ is decreasing monotonically in $\alpha$. By definition,
	\[
	\log A_\alpha(y)=\frac{\log\sqrt{\alpha^2+4}}{y\log\valpha-1}+\frac{1}{y\log\valpha-1}\log\pas{1+\frac{1}{\valpha\sqrt{\alpha^2+4}}}=:B(\alpha)+C(\alpha)
	\]
	say, with $y\log\valpha>1$. It is clear that $C(\alpha)$ is decreasing, and hence it is sufficient to discuss on $B(\alpha)$. For real $\alpha$ with $y\log\valpha>1$,
	\begin{align*}
		\frac{d}{d\alpha}B(\alpha)=\frac{1}{\pas{y\log\valpha-1}^2}\pas{\frac{2y\alpha}{\alpha^2+4}\log\valpha-\frac{2\alpha}{\alpha^2+4}-\frac{y\valpha^\prime}{\valpha}\log\pas{\alpha^2+4}}.
	\end{align*}
	Since $\valpha<\sqrt{\alpha^2+4}$, we estimate that the right-hand side of the above is
	\begin{align*}
		&<\frac{1}{\pas{y\log\valpha-1}^2}\pas{\frac{2y\alpha}{\alpha^2+4}\log\sqrt{\alpha^2+4}-\frac{y\valpha^\prime}{\valpha}\log\pas{\alpha^2+4}}\\
		&=\frac{y\log\pas{\alpha^2+4}}{\pas{y\log\valpha-1}^2}\pas{\frac{\alpha}{\alpha^2+4}-\frac{\valpha^\prime}{\valpha}}\\
		&=\frac{y\log\pas{\alpha^2+4}}{\pas{y\log\valpha-1}^2\pas{\alpha^2+4}\pas{\alpha+\sqrt{\alpha^2+4}}}\pas{\alpha\pas{\alpha+\sqrt{\alpha^2+4}}-\pas{1+\frac{\alpha}{\sqrt{\alpha^2+4}}}\pas{\alpha^2+4}}\\
		&=-\frac{4y\log\pas{\alpha^2+4}}{\pas{y\log\valpha-1}^2\pas{\alpha^2+4}\pas{\alpha+\sqrt{\alpha^2+4}}}<0.
	\end{align*}
\end{rem}

Let us consider the case $\alpha=55$.

\begin{rem}
	Since $\log A_{55}\pas{1/4}\simeq 2091.79$,
	\[
	\textup{ord}_{55}\pas{n}<\frac{1}{\log2}\log\log n
	\]
	holds at least for $n>e^{2092}$.
\end{rem}

In addition, we obtain the following corollary from Theorem.\ref{thm:estimation of ord(n)}.

\begin{crl}
	For every $\alpha\geq3$,
	\[
	\limsup_{n\to\infty}\frac{\ord{n}}{\log\log n}\leq \frac{1}{\log2}.
	\]
\end{crl}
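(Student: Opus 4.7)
The plan is simply to divide the uniform upper bound from Theorem~\ref{thm:estimation of ord(n)} by $\log\log n$ and pass to the limit. Concretely, for every $\alpha\geq 3$ that theorem gives
\[
\ord{n}<\frac{1}{\log 2}\log\log n+3 \qquad (n\geq 2),
\]
so for all $n\geq 3$ (say) with $\log\log n>0$ we have
\[
\frac{\ord{n}}{\log\log n}<\frac{1}{\log 2}+\frac{3}{\log\log n}.
\]

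Taking $\limsup_{n\to\infty}$ on both sides and using $\log\log n\to\infty$ kills the second term, which yields the desired inequality
\[
\limsup_{n\to\infty}\frac{\ord{n}}{\log\log n}\leq\frac{1}{\log 2}.
\]

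There is essentially no obstacle: the work is entirely in Theorem~\ref{thm:estimation of ord(n)}, and the corollary is a one-line consequence. The only care needed is to restrict $n$ so that $\log\log n$ is positive and nonzero (e.g.\ $n\geq 3$), which is harmless since we are taking a limsup as $n\to\infty$. I would write the proof in two or three lines, citing Theorem~\ref{thm:estimation of ord(n)} and dividing through.
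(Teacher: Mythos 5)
Your proposal is correct and matches the paper's intent exactly: the paper states this corollary as an immediate consequence of Theorem~\ref{thm:estimation of ord(n)}, and the one-line argument of dividing the bound $\ord{n}<\frac{1}{\log 2}\log\log n+3$ by $\log\log n$ and letting $n\to\infty$ is precisely the implied proof. No gaps; the restriction to $n$ large enough that $\log\log n>0$ is the only (harmless) care needed, as you note.
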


\begin{crl}
	\begin{align*}
		\lim_{\alpha\to\infty}\liminf_{n\to\infty}\pas{\flog{2}\log\log n-\ord{n}}=+\infty.
	\end{align*}
\end{crl}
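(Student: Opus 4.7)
The plan is to revisit the intermediate inequality used in the proof of Theorem \ref{thm:estimation of ord(n)}, rather than the final simplified form $\ord{n}<\flog{2}\log\log n+3$ (which only yields a constant lower bound on the liminf and would therefore be useless here). The relevant bound, established in that proof, is
\[
\ord{n}<\flog{2}\log\log n+\flog{2}\pas{\log\pas{1+\flog{n}\log\pas{\sqrt{\alpha^2+4}+\frac{1}{\valpha}}}-\log\log\valpha}+2,
\]
valid for all $n\geq 2$ and all $\alpha\geq 3$.

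The first step is to rearrange this inequality so that $\flog{2}\log\log n-\ord{n}$ sits on the left-hand side. The result is a lower bound with three pieces: a main term $\flog{2}\log\log\valpha$, an error term $-\flog{2}\log(1+\flog{n}\log(\sqrt{\alpha^2+4}+1/\valpha))$ that is $o(1)$ in $n$ for each fixed $\alpha$, and the harmless constant $-2$.

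Next, I would fix $\alpha\geq 3$ and send $n\to\infty$. Since $\flog{n}\to 0$ while the argument of the inner logarithm is constant in $n$, the error term vanishes, and the first limit yields
\[
\liminf_{n\to\infty}\pas{\flog{2}\log\log n-\ord{n}}\geq \flog{2}\log\log\valpha-2.
\]
Finally I would let $\alpha\to\infty$. Since $\valpha=(\alpha+\sqrt{\alpha^2+4})/2\geq \alpha$, we have $\log\log\valpha\to+\infty$, which gives the claim.

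There is essentially no substantive obstacle: the analytic heavy lifting was already performed in Theorem \ref{thm:estimation of ord(n)}, and the corollary merely extracts a complementary consequence of the same chain of estimates. The only subtlety to watch is to use the form of the bound that keeps $\log\log\valpha$ explicit, so that the $\alpha$-dependence survives once the outer limit is taken; applying the coarsened bound that absorbs $\log\log\valpha$ into the constant $3$ would throw away precisely the quantity needed to make the outer limit diverge.
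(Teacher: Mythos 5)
Your proposal is correct and follows exactly the paper's own route: the paper also invokes the intermediate inequality $(\ref{thm:estimation of ord(n)-2})$ to obtain $\liminf_{n\to\infty}\pas{\flog{2}\log\log n-\ord{n}}\geq\flog{2}\log\log\valpha-2$ and then lets $\alpha\to\infty$. Your remark about not using the coarsened bound with the constant $3$ is precisely the right subtlety, but there is nothing new here relative to the paper.
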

\begin{proof}
	From $(\ref{thm:estimation of ord(n)-2})$, we have
	\[
	\liminf_{n\to\infty}\pas{\flog{2}\log\log n-\ord{n}}\geq\flog{2}\log\log\valpha-2.
	\]
\end{proof}

From Theorem.\ref{thm:estimatation of ord(n) with ind(n)}, we have $\ord{F_p}\leq \log p/\log2 +2$ for prime $p$. In fact, there is quite a big difference between them, which can be observed by numerical tests $(\textup{FIGURE}\ref{FIGURE 1},\textup{FIGURE}\ref{FIGURE 2})$. Thus, the author believes that the following theorem will be useful in the future.

\begin{figure}[ht]
	\centering
	\includegraphics[width=0.6\columnwidth]{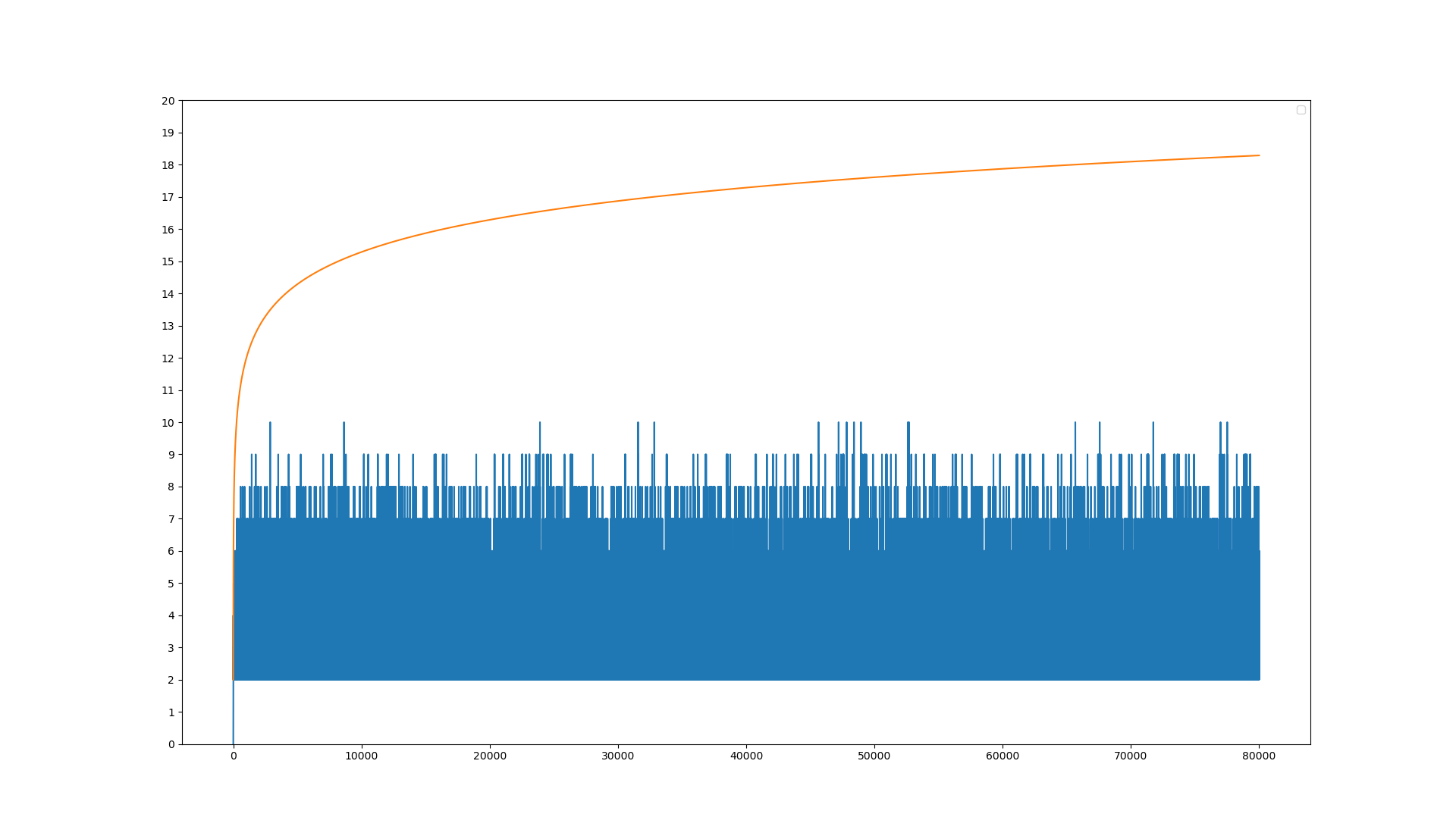}
	\caption{$\textup{ord}_3\pas{F_n}$ and $\log n/\log2+2\;(n\leq80000)$}
	\label{FIGURE 1}
\end{figure}
\begin{figure}[ht]
	\centering
	\includegraphics[width=0.6\columnwidth]{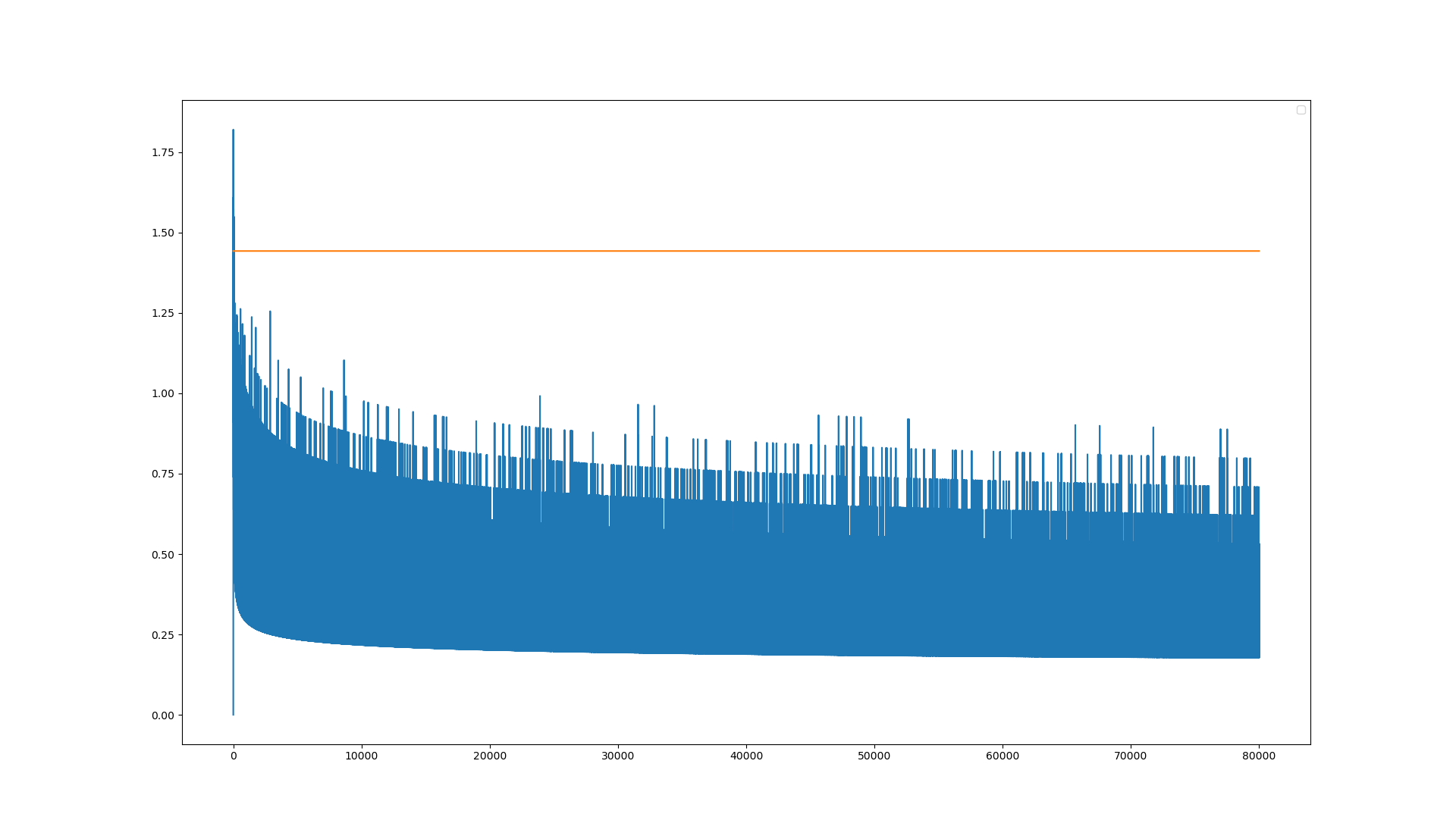}
	\caption{$\textup{ord}_3\pas{F_n}/\log (n+1)$ and $1/\log2\;(n\leq80000)$}
	\label{FIGURE 2}
\end{figure}

\newpage

\begin{thm}\label{thm:related with CC}
	Suppose $\alpha\geq3$, and put $C_\alpha:=\limsup_{p\to\infty}\frac{\ord{F_p}}{\log p}$. If $C_\alpha<1/\log2$ for some $\alpha$, then
	\[
	\limsup_{p\to\infty}\frac{l(p)}{\log p}\leq\frac{C_\alpha}{1-C_\alpha\log2}.
	\]
\end{thm}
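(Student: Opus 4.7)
The plan is to combine Corollary \ref{crl:divisors of F(2p+1)+1} with the hypothesis on $C_\alpha$, applied to the \emph{last} prime of a Cunningham chain. Fix an $\alpha \geq 3$ for which $C_\alpha < 1/\log 2$, and fix a small $\ep > 0$ with $(C_\alpha + \ep)\log 2 < 1$. By definition of $C_\alpha$, for every sufficiently large prime $r$ we have $\ord{F_r} \leq (C_\alpha + \ep)\log r$.

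Next, for a prime $p$ generating a Cunningham chain (of either kind) of length $l(p)$, let
\[
q := \pas{p \pm 1} 2^{l(p)-1} \mp 1,
\]
which is the last (and largest) prime in the chain. The key identity from Corollary \ref{crl:divisors of F(2p+1)+1} reads
\[
l(p) - 1 = \ord{F_q} - \ord{F_p} \leq \ord{F_q},
\]
since $\ord{F_p} \geq 0$. Applying the $C_\alpha$ estimate to the prime $q$ then yields
\[
l(p) - 1 \leq (C_\alpha + \ep)\log q
\]
for all sufficiently large $p$ (note $q \to \infty$ as $p \to \infty$).

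Now I would just plug in the size of $q$: from $q < (p+1)2^{l(p)-1}$ we get $\log q < \log(p+1) + (l(p)-1)\log 2$, whence
\[
l(p) - 1 \leq (C_\alpha + \ep)\pas{\log(p+1) + (l(p)-1)\log 2}.
\]
Solving this linear inequality for $l(p) - 1$, which is legitimate because $1 - (C_\alpha + \ep)\log 2 > 0$ by the choice of $\ep$, gives
\[
l(p) - 1 \leq \frac{C_\alpha + \ep}{1 - (C_\alpha + \ep)\log 2}\, \log(p+1).
\]
Dividing by $\log p$, taking $\limsup_{p \to \infty}$, and then letting $\ep \to 0^+$ (using continuity of $t \mapsto t/(1-t\log 2)$ at $t = C_\alpha$) yields the claimed bound.

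There is no real obstacle here: the work is really all in Corollary \ref{crl:divisors of F(2p+1)+1}, and once that identity is in hand the argument is just bookkeeping. The only point that requires a moment of care is recognizing that the hypothesis $C_\alpha < 1/\log 2$ is exactly what is needed to invert the inequality (the coefficient of $l(p)-1$ on the left after subtracting the right-hand $l(p)-1$ term must remain positive); this is the precise reason the threshold $1/\log 2$ appears in the statement.
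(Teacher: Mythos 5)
Your proposal is correct and follows essentially the same route as the paper: both use the identity $l(p)-1=\ord{F_{(p\pm1)2^{l(p)-1}\mp1}}-\ord{F_p}$ from Corollary \ref{crl:divisors of F(2p+1)+1}, apply the $\ep$-approximate bound $\ord{F_q}<(C_\alpha+\ep)\log q$ to the last prime $q$ of the chain, expand $\log q$ as $\log p+(l(p)-1)\log2$ up to negligible error, and solve the resulting linear inequality using $1-(C_\alpha+\ep)\log2>0$. The only cosmetic differences are that you drop the nonnegative term $\ord{F_p}$ and use a clean upper bound for $\log q$ where the paper keeps an $O(1/p)$ error term.
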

\begin{proof}
	Suppose that $C_\alpha<1/\log2$. For every $0<\ep<1/\log2-C_\alpha$,
	\begin{align}\label{thm:related with CC-1}
		\ord{F_p}<\pas{C_\alpha+\ep}\log p
	\end{align}
	with sufficiently large $p$. And
	\[
	\log\pas{\pas{p\pm1}2^{l_{\pm1}(p)-1}\mp1}=\pas{l_{\pm1}(p)-1}\log2+\log p+O\pas{\frac{1}{p}}.
	\]
	We replace $p$ by $\pas{p\pm1}2^{l_{\pm1}(p)-1}\mp1$ in $(\ref{thm:related with CC-1})$. Then
	\[
	l(p)-1+\ord{F_p}<\pas{C_\alpha+\ep}\pas{\pas{l_{\pm1}(p)-1}\log2+\log p+O\pas{\frac{1}{p}}}
	\]
	from Corollary.\ref{crl:divisors of F(2p+1)+1}, and hence we have
	\[
	l(p)<1+\frac{C_\alpha+\ep}{1-(C_\alpha+\ep)\log2}\log p+O\pas{\frac{1}{p}}.
	\]
	Divide the both sides by $\log p$, and take limit superior with respect to $p$. Then we find that
	\[
	\limsup_{p\to\infty}\frac{l(p)}{\log p}\leq\frac{C_\alpha+\ep}{1-\pas{C_\alpha+\ep}\log2}.
	\]
\end{proof}

The sufficient condition of Theorem.\ref{thm:related with CC}, written in terms of prime numbers, can be replaced by the condition written in terms of natural numbers.

\begin{crl}\label{crl:related with CC}
	Suppose that $\alpha\geq3$, and put $D_\alpha:=\limsup_{n\to\infty}\frac{\ord{n}}{\log\log n}$. If $D_\alpha<1/\log2$ for some $\alpha$, then
	\[
	\limsup_{p\to\infty}\frac{l(p)}{\log p}\leq\frac{D_\alpha}{1-D_\alpha\log2}.
	\]
\end{crl}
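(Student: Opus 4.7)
The plan is to reduce the corollary directly to Theorem \ref{thm:related with CC} by establishing the inequality $C_\alpha \leq D_\alpha$ and then exploiting the monotonicity of the map $x \mapsto x/(1 - x\log 2)$ on the interval $[0, 1/\log 2)$. The only analytic input needed beyond Theorem \ref{thm:related with CC} is the asymptotic $\log\log F_p \sim \log p$ as $p \to \infty$ through primes, which comes straight from Binet's formula in Fact \ref{fact:fact of Fa numbers}(a).

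First I would compute $\log\log F_p$. By Fact \ref{fact:fact of Fa numbers}(a), $F_p = (\valpha^p - (-\valpha)^{-p})/\sqrt{\alpha^2+4}$, so taking logarithms twice gives
\[
\log F_p = p\log\valpha + O(1), \qquad \log\log F_p = \log p + \log\log\valpha + o(1)
\]
as $p \to \infty$. In particular $(\log\log F_p)/\log p \to 1$, which lets me rewrite
\[
C_\alpha = \limsup_{p\to\infty}\frac{\ord{F_p}}{\log p} = \limsup_{p\to\infty}\frac{\ord{F_p}}{\log\log F_p}.
\]
Since $\{F_p : p \in \P\}$ is a subsequence of $\N$, the right-hand lim sup is at most the lim sup over all natural numbers, namely $D_\alpha$. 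Therefore $C_\alpha \leq D_\alpha$.

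Now I would assume the hypothesis $D_\alpha < 1/\log 2$. Then automatically $C_\alpha < 1/\log 2$, so Theorem \ref{thm:related with CC} is applicable to this choice of $\alpha$ and yields
\[
\limsup_{p\to\infty}\frac{l(p)}{\log p} \leq \frac{C_\alpha}{1 - C_\alpha\log 2}.
\]
Finally, the function $f(x) = x/(1 - x\log 2)$ satisfies $f'(x) = (1 - x\log 2)^{-2} > 0$ on $[0, 1/\log 2)$, so $f$ is strictly increasing there; combined with $C_\alpha \leq D_\alpha$ this gives $f(C_\alpha) \leq f(D_\alpha)$, which is the desired bound.

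There is essentially no obstacle here: the argument is a two-line composition of Theorem \ref{thm:related with CC} with the elementary fact $\log\log F_p \sim \log p$ and the monotonicity of $f$. The only subtle point to state correctly is that a subsequence lim sup is dominated by the full lim sup, so that the passage from $\ord{F_p}/\log\log F_p$ to $\ord{n}/\log\log n$ is justified.
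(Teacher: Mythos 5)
Your proposal is correct and follows essentially the same route as the paper: both establish $C_\alpha\leq D_\alpha$ via $\log\log F_p\sim\log p$ from Fact \ref{fact:fact of Fa numbers}(a) and the fact that a lim sup along the subsequence $\{F_p\}$ is dominated by the lim sup over all $n$, and then invoke Theorem \ref{thm:related with CC}. Your explicit remark on the monotonicity of $x\mapsto x/(1-x\log 2)$ on $[0,1/\log 2)$ is a small but welcome clarification of a step the paper leaves implicit.
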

\begin{proof}
	For all natural $n$,
	\[
	\log F_n=n\log\valpha+\log\pas{\frac{1-\pas{-\valpha^2}^{-n}}{\sqrt{\alpha^2+4}}}
	\]
	from Fact.\ref{fact:fact of Fa numbers} $(a)$. In particular, $\log\log F_n\sim \log n$. Then, for every $\ep>0$, we have $\log p/\log\log F_p>1-\ep$ with any sufficiently large $p$. Thus we estimate that
	\[
	D_\alpha\geq\limsup_{p\to\infty}\frac{\ord{F_p}}{\log\log F_p}=\limsup_{p\to\infty}\frac{\ord{F_p}}{\log p}\cdot\frac{\log p}{\log\log F_p}\geq\pas{1-\ep}\limsup_{p\to\infty}\frac{\ord{F_p}}{\log p}.
	\]
	Now, since $\ep>0$ is arbitrary, we get
	\[
	\limsup_{p\to\infty}\frac{\ord{F_p}}{\log p}\leq D_\alpha.
	\]
	Here we suppose $D_\alpha<1/\log2$ and apply Theorem.\ref{thm:related with CC}, then the proof is complete.
\end{proof}

The advantage of this corollary is that the problem of upper estimation of $l(p)$ is reduced to the situation that we can use number theoretic methods which cannot be applied to prime numbers.

\section{Remaining Problems}
In this paper, an experimentally reliable sufficient condition for $l(p)\ll\log p$ was obtained using elementary methods that do not involve differentiation and integration. If we could successfully use analytical methods, perhaps we would obtain better estimation. For example, it is describable to find some analogy of $(\ref{eq:Dirichlet product})$ with respect to $\fdf^2,\fdf^3,\cdots$, or some non-trivial order of $\sum\ord{n}$. However, the difficulty lies in the fact that $\ord{n}$ is defined by the iterations of $\fdf$. The iterations of the divisor function $\sigma(n)$ and the Euler function $\varphi(n)$ have been considered in \cite{Erdos},\cite{Erdos-Granville-Pomerance-Spiro},\cite{Erdos-Subbarao} and so on; however those researches seem to be possible because $\sigma,\varphi$ are number theoretically easier to treat. Even though $\fdf$ is not multiplicative, $\Falpha$ numbers has some nice properties related to multiplication, such as Fact.\ref{fact:fact of Fa numbers} $(e),(f)$.
In the future, it is expected that such properties will be used well to obtain further results on the function $\textup{ord}_\alpha$.

Finally, we list the problems to be solved.
\begin{problem}
	\[
	\limsup_{n\to\infty}\frac{\ord{n}}{\log\log n}\overset{?}{<}\flog{2}
	\]
\end{problem}
It is shown in Theorem.\ref{crl:related with CC} that $l(p)\ll\log p$ holds if this inequality is true. Here we recall Conjecture.\ref{cjt:omega order of Cunningham chains}, which is not so far from the above inequality.
\begin{problem}
	Is there a sequence that is different from $\Falpha$ with ``similar properties"?
\end{problem}
The term ``similar properties" means those that are related to a chain of prime numbers.


\begin{thebibliography}{99}
	\bibitem{Agrawal-Kayal-Saxena} M. Agrawal, N. Kayal, N. Saxena, \textit{PRIMES is in P}, Annals of Mathematics, vol.\textbf{160}(2) (2004), pp.781-793.

	\bibitem{Artin} E. Artin, \textit{The Collected Papers of Emil Artin}, Edited by Serge Lang and John T. Tate, Addison-Wesley, Inc., Reading, Mass.-London (1965).

	\bibitem{Augustin} D. Augustin, \textit{Cunningham Chain records}, http://primerecords.dk/Cunningham\_Chain\_records.htm

	\bibitem{Bateman-Horn} P. T. Bateman, R. A. Horn, \textit{A Heuristic Asymptotic Formula Concerning the Distribution of Prime Numbers}, Mathematics of Computation, vol.\textbf{16}(79) (1962), pp.363-367.

	\bibitem{Caldwell} C. K. Caldwell, \textit{An Amazing Prime Heuristic}, arXiv:2103.04483 (2021).

	\bibitem{Dickson} L. E. Dickson, \textit{A New Extension of Dirichlet`s Theorem on Prime Numbers}, The Messenger of Mathematics, vol.\textbf{133} (1903-1905), pp.155-161.

	\bibitem{Diffie-Hellman} W. Diffie, M. E. Hellman, \textit{New Directions in Cryptography}, IEEE Transactions on Information Theory, vol.\textbf{IT-22}(6) (1976), pp.644-654.

	\bibitem{Dirichlet} P. G. L. Dirichlet, \textit{Beweis des Satzes, dass jede unbegrenzte arithmetische Progression, deren erstes Glied und Differenz ganze Zahlen ohne gemeinschaftlichen Factor sind, unendlich viele Primzahlen enth\"{a}lt}, Abhandlungen der K\"{o}niglichen Akademie der Wissenschaften zu Berlin (1837), pp.45-81.

	\bibitem{Egami} S. Egami, \textit{Some curious Dirichlet series}, RIMS K\^{o}ky\^{u}roku, vol.\textbf{1091} (1999), pp.172-174.

	\bibitem{Erdos} P. Erd\"{o}s, \textit{Some remarks on the iterates of the $\varphi$ and $\sigma$ functions}, Colloquium Mathematicum, Institute of Mathematics Polish Academy of Sciences, vol.\textbf{17}(2) (1967), pp.192-202.

	\bibitem{Erdos-Granville-Pomerance-Spiro} P. Erd\"{o}s, A. Granville, C. Pomerance and C. Spiro, \textit{On the Normal Behavior of the Iterates Of some Arithmetic Functions}, Analytic Number Theory, Progress in Mathematics, vol.\textbf{85} (1990), pp.165-204.

	\bibitem{Erdos-Subbarao} P. Erd\"{o}s, M. V. Subbarao, \textit{On the iterates of some arithmetic functions}, The theory of arithmetic functions, Lecture Notes in Mathematics, vol.\textbf{251} (1972), pp.119-125.

	\bibitem{Green-Tao} B. Green and T. Tao, \textit{The primes contain arbitrarily long arithmetic progressions}, Annals of Mathematics, vol.\textbf{167} (2008), pp.481-547.

	\bibitem{Hoggatt} V. E. Hoggatt, Jr, \textit{Generalized Zeckendorf Theorem}, The Fibonacci Quarterly, vol.\textbf{10} (1972), pp.89-93.

	\bibitem{Keller} T. J. Keller, \textit{Generalizations of Zeckendorf`s Theorem}, The Fibonacci Quarterly, vol.\textbf{10} (1972), pp.95-102,112.

	\bibitem{Koshy} T. Koshy, \textit{Fibonacci and Lucas with Applications Second Edition}, Wiley, vol.\textbf{1} (2018).

	\bibitem{Lenstra} H. W. Lenstra Jr, \textit{Factoring integers with elliptic curves}, Annals of mathematics, vol.\textbf{126}(3) (1987), pp.649-673.

	\bibitem{Lenstra-Pomerance} H. W. Lenstra Jr, C. Pomerance, \textit{Primality testing with Gaussian periods}, European Mathematical Society, vol.\textbf{21}(4) (2019), pp.1229-1269.

	\bibitem{Navas} L. Navas, \textit{Analytic continuation of the Fibonacci Dirichlet series}. The Fibonacci Quarterly, vol.\textbf{39} (2000), pp.409-418.

	\bibitem{Pollard} J. M. Pollard, \textit{Theorems on factorization and primality testing}, Proceedings of the Cambridge Philosophical Society, vol.\textbf{76} (1974), pp.521–528.

	\bibitem{Rosser-Schoenfeld} J. B. Rosser, L. Schoenfeld, \textit{Approximate formulas for some functions of prime numbers}, Illinois Journal of Mathematics vol.\textbf{6}(1) (1962), pp.64-94.
\end{thebibliography}
\end{document}